\documentclass{amsart}
\usepackage{amssymb, amsmath, latexsym, mathabx, dsfont,tikz}

\linespread{1.3}
\renewcommand{\baselinestretch}{\baselinestretch}
\renewcommand{\baselinestretch}{1.1}
\numberwithin{equation}{section}

\newtheorem{thm}{Theorem}[section]
\newtheorem{lem}[thm]{Lemma}

\newtheorem{prop}[thm]{Proposition}
\newtheorem{conj}[thm]{Conjecture}

\theoremstyle{definition}

\theoremstyle{remark}
\newtheorem{rmk}[thm]{Remark}
\newtheorem{exam}[thm]{Example}

\numberwithin{equation}{section}

\newcommand{\gen}{\text{gen}}

\newcommand{\ord}{\text{ord}}

\newcommand{\z}{{\mathbb Z}}
\newcommand{\q}{{\mathbb Q}}

\begin{document}

\title[The number of representations of squares]{The number of representations of squares by integral ternary quadratic forms}

\author{Kyoungmin Kim and Byeong-Kweon Oh}

\address{Department of Mathematical Sciences, Seoul National University, Seoul 151-747, Korea}
\email{kiny30@snu.ac.kr}

\address{Department of Mathematical Sciences and Research Institute of Mathematics, Seoul National University, Seoul 151-747, Korea}
\email{bkoh@snu.ac.kr}

\thanks{This work  was supported by the National Research Foundation of Korea (NRF-2014R1A1A2056296).}

\subjclass[2000]{Primary 11E12, 11E20} \keywords{Representations of ternary quadratic forms, squares}

%%% ----------------------------------------------------------------------

\begin{abstract} Let $f$ be a positive definite integral ternary quadratic form and let $r(k,f)$ be the number of representations of an integer $k$ by $f$. 
In this article we study the number of representations of squares by $f$. 
We say the genus of $f$, denoted by $\gen(f)$, is indistinguishable by squares if  for any integer $n$, $r(n^2,f)=r(n^2,f')$ for any quadratic form $f' \in \gen(f)$.  We find some non trivial genera of ternary quadratic forms which are indistinguishable by squares. We also give some relation between indistinguishable genera by squares and the conjecture given by Cooper and Lam, and we resolve their conjecture completely.       
\end{abstract}

\maketitle

\section{Introduction} 
For a positive definite integral ternary quadratic form 
$$
f(x_1,x_2,x_3)=\sum_{1 \le i, j\le 3}a_{ij}x_ix_j \qquad (a_{ij}=a_{ji} \in \z)  
$$ 
and an integer $n$, we define a set $R(n,f)=\{ (x_1,x_2,x_3) \in \z^3 : f(x_1,x_2,x_3)=n\}$, and 
 $r(n,f)=\vert R(n,f)\vert$. It is well known that $r(n,f)$ is always  finite  if $f$ is positive definite.
Finding a closed formula for $r(n,f)$ or finding all integers $n$ such that $r(n,f) \ne 0$ for an arbitrary ternary quadratic form $f$ are quite old problems which are still widely open. As one of the simplest cases, Gauss showed that if $f$ is a sum of three squares, then $r(n,f)$ is a multiple of the Hurwitz-Kronecker class number. In fact, if the class number of $f$ is one, then Minkowski-Siegel formula gives a closed formula for $r(n,f)$. As a natural modification of the   Minkowski-Siegel formula, it was proved in \cite {k} and \cite {w} that the weighted sum of the representations of quadratic forms in the spinor genus is also equal to the product of local densities except spinor exceptional integers (see also \cite {sp1} for spinor exceptional integers). Hence if the spinor class number $g^{+}(f)$ of $f$ is one, we also have a closed formula for $r(n,f)$. As far as the authors know, there is no known closed formula for $r(n,f)$ except those cases (for some relations between $r(n,f)$'s, see \cite{jlo}). If $r(n,f)=r(n,f')$ for any $f' \in \gen(f)$, it is certain that Minkowski-Siegel formula gives a closed formula for $r(n,f)$. However, Schiemann proved in \cite{s} that for two ternary forms $f$ and $f'$, if $r(n,f)=r(n,f')$ for any  positive integer $n$, then $f$ is isometric to $f'$. 

If we consider a proper subset $S$ of positive integers, then it might be possible that there are non isometric forms $f$ and $f'$ such that $r(n,f)=r(n,f')$ for any integer $n \in S$. In this article, we consider the case when $S$ is the set of perfect squares. We say the genus of a ternary quadratic form $f$ is {\it indistinguishable by squares} if $r(n^2,f)=r(n^2,f')$ for any $f'\in \gen(f)$ and any integer $n$. It is obvious that if the genus of $f$ does not represent any squares of integers, that is, $r(n^2,f')=0$ for any integer $n$ and any $f' \in \gen(f)$, or the class number of $f$ is one, then the genus of $f$ is indistinguishable by squares. If the genus of $f$ is indistinguishable by squares, then Minkowski-Siegel formula gives a closed formula for $r(n^2,f)$ for any integer $n$.    

In 2013, Cooper and Lam gave a conjecture in \cite{cl} on the representations of squares by diagonal ternary quadratic forms representing $1$. In that article, they proved by using some  $q$-series identities, that for the quadratic form $f=x^2+by^2+cz^2$ with $(b,c)=(1,1),(1,2),(1,3),$ $(2,2),(3,3)$, 
$$
r(n^2,f)=\prod_{p\mid 2bc} g(b,c,p,\ord_p(n))\prod_{p \nmid 2bc} h(b,c,p,\ord_p(n)),
$$
where  
$$
h(b,c,p,\ord_p(n))=\frac{p^{\ord_p(n)+1}-1}{p-1}-\left(\frac{-df}p\right)\frac{p^{\ord_p(n)}-1}{p-1}
$$
 and $g(b,c,p,\ord_p(n))$ has to determined on an individual and case-by-case basis, and they conjectured that the above equality also holds for some $64$ pairs of $(b,c)$ (see Table 1 in Section 3).  Recently, Guo, Peng and Qin  in \cite{gpq} verified the conjecture when 
$$
(b,c)=(1,4), (1,5), (1,6), (1,8), (2,3), (2,4), (2,6), (3,6), (4,4), (4,8), (5,5), (6,6)
$$  
by using modular forms of weight $3/2$. 

In this article, we show that the genus of a ternary quadratic form $f$ is indistinguishable by squares if and only if every quadratic form in the genus of $f$ satisfies the condition given in the above conjecture (even if $f$ is non-diagonal, the condition can be extended in an obvious way). Furthermore, we resolve their conjecture completely. Our method is based on Minkowski-Siegel formula on the weighted sum of the representations by quadratic forms in the same genus.

The term lattice will always refer to a positive definite integral $\z$-lattice on an $n$-dimensional positive definite quadratic space over $\q$. Let $L=\z x_1+\z x_2+ \dots+\z x_n$ be a $\z$-lattice of rank $n$. We write
$$
L\simeq (B(x_i,x_j)).
$$
The right hand side matrix is called a {\it matrix presentation} of
$L$.

For two $\z$-lattices $M$ and $L$, a linear map $\sigma : M \to L$ is called a {\it representation} from $M$ to $L$ if it preserves the bilinear form, that is, 
$$
B(\sigma(x),\sigma(y))=B(x,y) \quad \text{for any $x,y \in M$}.
$$  
We define
$$
R(M,L)=\{ \sigma : M \to L \mid \text{$\sigma$ is a representation}\}
$$
and $r(M,L)=\vert R(M,L)\vert$.
In particular, $O(L)=R(L,L)$ which is called the isometry group of $L$, and $o(L)=r(L,L)$.
For any $\z$-lattice $L$, the class containing $L$ in $\gen(L)$ is denoted by $[L]$. As usual, we define 
$$
w(L)=\sum_{[M] \in \gen(L)} \frac1{o(M)}\quad \text{and} \quad   r(K,\gen(L))= \frac 1 {w(L)}  \sum_{[M] \in \gen(L)} \displaystyle \frac{r(K,M)}{o(M)},
$$
for any $\z$-lattice $K$. 
Throughout this article, we assume that every lattice $L$ is positive definite and primitive in the sense that the scale $\mathfrak s(L)$ of $L$ is $\z$, unless stated otherwise.  

Any unexplained notations and terminologies can be found in  \cite {ki} or \cite {om}.

%%%%%%%%%%%%%%%%%%%%%%%%%%%%%%%%%%%%%%%%%%%%%%%%%%%%%%%%%%%%%%%%%%%%%%%%%%%%%%%%%
%%%%%%%%%%%%%%%%%%%%%%%%%%%%%%%%%%%%%%%%%%%%%%%%%%%%%%%%%%%%%%%%%%%%%%%%%%%%%%%%%
\section{representations of squares by ternary quadratic forms}
%%%%%%%%%%%%%%%%%%%%%%%%%%%%%%%%%%%%%%%%%%%%%%%%%%%%%%%%%%%%%%%%%%%%%%%%%%%%%%%%%
%%%%%%%%%%%%%%%%%%%%%%%%%%%%%%%%%%%%%%%%%%%%%%%%%%%%%%%%%%%%%%%%%%%%%%%%%%%%%%%%%

In this section, we investigate various properties of the indistinguishable genus of a ternary quadratic form by squares. 
Let $L$ be a (positive definite integral) $\z$-lattice. As pointed out in the introduction, some genera of ternary $\z$-lattices are obviously indistinguishable by squares. 
   
\begin{lem} \label{space}
 Let $L$ be a ternary $\z$-lattice and let $V=\q\otimes L$ be the quadratic space. Then
$r(n^2,L')=0$ for any $L' \in \gen(L)$ and any  positive integer $n$ if and only if   $d(V_p)=-1$ and  $S_p(V) \ne (-1,-1)_p$  for some prime $p$.
\end{lem}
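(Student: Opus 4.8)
The plan is to replace the global statement by local conditions and then invoke the classification of ternary quadratic spaces over the $p$-adic fields. The key reduction: saying $r(n^{2},L')=0$ for every $L'\in\gen(L)$ is the same as saying that the integer $n^{2}$ is not represented by any lattice in $\gen(L)$, and by the local--global principle for representations by a genus (see \cite{ki} or \cite{om}) this holds if and only if $L_{p}$ fails to represent $n^{2}$ for some prime $p$ (the real place imposes nothing, since $V$ is positive definite and $n^{2}>0$). So the left-hand side asserts: for every positive integer $n$ some prime $p$ has $L_{p}$ not representing $n^{2}$. On the local side I would record the dictionary that, for a ternary quadratic space $W$ over $\q_{p}$, the conditions (a) $d(W)=-1$ and $S_{p}(W)\ne(-1,-1)_{p}$, (b) $W\perp\langle-1\rangle$ is anisotropic, (c) $W$ does not represent $1$, and (d) $W$ represents no nonzero square of $\q_{p}$ are all equivalent: (a)$\Leftrightarrow$(b) is the classification of quaternary $\q_{p}$-spaces (there is a unique anisotropic one, and $d(W\perp\langle-1\rangle)=-d(W)$), (b)$\Leftrightarrow$(c) is immediate, and (c)$\Leftrightarrow$(d) holds because a space represents an element precisely when it represents the whole square class of that element, and every $n^{2}$ lies in the square class of $1$.

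The ``if'' direction is then immediate: if $d(V_{p})=-1$ and $S_{p}(V)\ne(-1,-1)_{p}$ for some $p$, then by the dictionary $V_{p}$ represents no nonzero square of $\q_{p}$, hence neither does the full sublattice $L_{p}\subseteq V_{p}$; since $M_{p}\cong L_{p}$ for every $M\in\gen(L)$, no lattice in $\gen(L)$ represents any $n^{2}$, which is the left-hand side.

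For the ``only if'' direction I would prove the contrapositive. Assume $V_{p}$ represents $1$ for every prime $p$ (the negation of the right-hand side, by the dictionary); I must produce a positive integer $n$ with $n^{2}$ represented by $\gen(L)$. Let $P$ be the finite set of primes $p$ such that $p=2$ or $L_{p}$ is not unimodular. If $p\notin P$, then $L_{p}$ is a unimodular $\z_{p}$-lattice of rank $3$ with $p$ odd, hence represents every element of $\z_{p}$ (\cite{ki}), in particular every $n^{2}$. If $p\in P$, fix $v_{p}\in V_{p}$ with $B(v_{p},v_{p})=1$; since $L_{p}$ is a full $\z_{p}$-lattice on $V_{p}$, there is an integer $a_{p}\ge 0$ with $p^{a_{p}}v_{p}\in L_{p}$, and then $p^{k}u\,v_{p}\in L_{p}$ with $B(p^{k}u\,v_{p},p^{k}u\,v_{p})=p^{2k}u^{2}$ for every $k\ge a_{p}$ and every unit $u\in\z_{p}^{\times}$. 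Taking $n=\prod_{p\in P}p^{a_{p}}$, for each $p\in P$ we get $n^{2}=p^{2a_{p}}u^{2}$ with $u=n/p^{a_{p}}\in\z_{p}^{\times}$, so $L_{p}$ represents $n^{2}$; and $L_{p}$ represents $n^{2}$ for $p\notin P$ as well. Thus $n^{2}$ is represented by $L_{p}$ for every $p$, so by the local--global principle some $M\in\gen(L)$ represents $n^{2}$, i.e.\ $r(n^{2},M)>0$, contradicting the left-hand side.

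The genuinely routine inputs are the classification of $\q_{p}$-spaces underlying the dictionary and the universality over $\z_{p}$ of unimodular ternary lattices at odd primes. The step carrying the real content, and where I would be most careful, is the construction in the converse: the crucial observation is that once $V_{p}$ represents the square class of $1$, rescaling a single norm-$1$ vector lets $L_{p}$ represent $p^{2k}u^{2}$ for \emph{every} unit $u$ (provided $k\ge a_{p}$), which is exactly what makes it possible to amalgamate the local data at the finitely many bad primes into a single global square $n^{2}$.
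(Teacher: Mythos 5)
Your proof is correct, but it routes the converse differently from the paper. The paper's own proof is a one-liner: it observes that $r(n^2,L')=0$ for all $L'\in\gen(L)$ and all $n$ if and only if $1$ is not represented by the rational space $V$, and then reads off the stated local condition; the content behind that observation is just that a vector $v\in V$ with $Q(v)=1$ can be scaled by a single global integer $m$ so that $mv\in L$, whence $L$ itself represents $m^2$, while conversely any representation of $n^2$ by some $L'$ gives a representation of the square class of $1$ by $V\simeq \q L'$, and Hasse--Minkowski (plus the local criterion for a ternary space to miss a square class) converts ``$1$ not represented by $V$'' into the condition at some finite prime, the real place being harmless by positive definiteness. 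You instead never form the global statement ``$V$ represents $1$'': you invoke the local--global principle for representations by a genus and then amalgamate local data by hand -- universality of odd unimodular ternary $\z_p$-lattices at the good primes and scaling a local norm-$1$ vector into $L_p$ at the finitely many bad primes to manufacture $n=\prod p^{a_p}$. That is valid, and it has the merit of producing the integer $n$ explicitly from local data, but it leans on the heavier genus-representation theorem where the paper only needs denominator clearing; your local amalgamation is really the localized shadow of the paper's single global scaling. Two minor points to keep honest: in your dictionary, the step (a)$\Leftrightarrow$(b) silently uses that among quaternary $\q_p$-spaces of discriminant $1$ the anisotropic one is singled out by its Hasse symbol and that the symbol of $W\perp\langle-1\rangle$ translates into $S_p(W)\ne(-1,-1)_p$ -- routine, but it is exactly the bookkeeping the lemma's statement encodes, so it deserves a citation (e.g.\ to \cite{om}); and your appeal to universality at good primes is not strictly needed, since at those primes $n$ is a unit and a rank-$3$ unimodular lattice represents all units already, but as stated it is also correct.
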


\begin{proof} The lemma follows directly from the fact that
$r(n^2,L')=0$ for any $L' \in \gen(L)$ and any  positive integer $n$ if and only if $1$ is not represented by $V$. 
\end{proof}

%From now on, we always assume that $V=\q\otimes L$ represents $1$ and the class number $h(L)$ of $L$ is bigger than $1$. 

\begin{rmk} {\rm
In fact, it is possible that $1$ is not represented by a genus which is indistinguishable by squares. For example, 
let $L\simeq\langle2,3,24\rangle$. Then one may easily check that the class number of $L$ is $2$ and the other lattice in the genus is $L'\simeq\begin{pmatrix} 5&1 \\ 1&5\end{pmatrix}\perp \langle6\rangle$. By checking the local structure at $p=2$ and  $3$, one may easily show that $r(n^2,L)=r(n^2,L')=0$ for any integer $n$ not divisible by $6$. Assume that 
$$
36n^2=2x^2+3y^2+24z^2
$$ 
for some integers $n$ and $x,y,z$. Then one may easily check that $x \equiv 0 \pmod 6$ and $y\equiv 0 \pmod 2$. Therefore we have  $r(36n^2,L)=r(3n^2,\langle1,2,6\rangle)$. Similarly, one may also check that  $r(36n^2,L')=r(3n^2,\langle1,2,6\rangle)$. Therefore we have $r(n^2,L)=r(n^2,L')$ for any integer $n$. } 
\end{rmk} 

\begin{lem} \label{important-l}
Let $L$ be a ternary $\z$-lattice and let $n$ be a positive integer. For any prime $p$, we assume that $\ord_p(n)=\lambda_p$. If $1$ is represented by the genus of $L$, then we have 
$$
\begin{array}  {rl}
\displaystyle \frac{r(n^2,\text{gen}(L))}{r(1,\gen(L))}
&=n\displaystyle  \prod_{p\vert n,  p\vert 2dL} \frac{\alpha_{p}(n^2,L)}{\alpha_{p}(1,L)}\displaystyle  \prod_{ p\vert n, p\nmid 2dL} \frac{\alpha_{p}(n^2,L)}{\alpha_{p}(1,L)}\\
&=\displaystyle \prod_{ p\vert n, p\vert 2dL}\!\!\! p^{\lambda_p} \cdot \frac{\alpha_{p}(n^2,L)}{\alpha_{p}(1,L)}
\displaystyle \prod_{ p\vert n, p\nmid 2dL} \!\!\! \left(\frac{p^{\lambda_p +1}-1}{p-1}-\left(\frac{-dL}{p}\right)\frac{p^{\lambda_p}-1}{p-1}\right).\\
\end{array}
$$
In particular, if the lattice $L$ has class number 1, then we have
$$
r(n^2,L)=r(1,L)\prod_{ \substack  p\vert n, \\p\vert 2dL}p^{\lambda_p}\cdot \frac{\alpha_{p}(n^2,L)}{\alpha_{p}(1,L)}
\prod_{ \substack  p\vert n, \\p\nmid 2dL} \left(\frac{p^{\lambda_p +1}-1}{p-1}-\left(\frac{-dL}{p}\right)\frac{p^{\lambda_p}-1}{p-1}\right).
$$
\end{lem}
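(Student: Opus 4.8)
The plan is to start from the Minkowski--Siegel formula, which expresses $r(K,\gen(L))$ as a product of local densities $\alpha_p(K,L)$ times the archimedean factor. Writing $r(n^2,\gen(L))/r(1,\gen(L))$ as a ratio, the archimedean factors and all local factors at primes $p \nmid n$ cancel, since $\alpha_p(n^2,L)=\alpha_p(1,L)$ whenever $p\nmid n$ (the local representation of $n^2$ and of $1$ agree as $p$-adic integers up to a $p$-adic unit square). This immediately reduces everything to a finite product over primes $p\mid n$, splitting into the two subproducts over $p\mid 2dL$ and $p\nmid 2dL$ displayed in the statement; the first equality (with the global factor $n$ pulled out) and the second equality then differ only in how one bookkeeps the power of $p$: writing $n = \prod_p p^{\lambda_p}$ gives $n = \prod_{p\mid n} p^{\lambda_p}$, so the factor $n$ distributes over the two subproducts and gets absorbed, contributing $p^{\lambda_p}$ to each prime. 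For $p\mid 2dL$ one simply keeps $p^{\lambda_p}\cdot \alpha_p(n^2,L)/\alpha_p(1,L)$ as an unevaluated local factor.

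The substantive computation is the evaluation of $\alpha_p(n^2,L)/\alpha_p(1,L)$ at the good primes $p\nmid 2dL$. Here $L$ is unimodular over $\z_p$ of rank $3$, so over $\z_p$ one may take $f$ to be a diagonal form with unit coefficients, and the local density $\alpha_p(m,L)$ for a $p$-adic unit times a square reduces to a standard count. The key step is to show that for $p\nmid 2dL$ and $\lambda=\ord_p(n)$,
$$
\frac{\alpha_p(n^2,L)}{\alpha_p(1,L)} = \frac{p^{\lambda+1}-1}{p-1} - \left(\frac{-dL}{p}\right)\frac{p^{\lambda}-1}{p-1}.
$$
I would establish this by the usual recursion on $\lambda$: stratify $R(p^{2\lambda}u, f)$ over $\z/p^{2\lambda+1}$ according to the $p$-divisibility of the solution vector, peeling off primitive solutions at each level. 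A primitive vector representing a unit mod $p$ exists with the expected count governed by the number of points on the quadric $f=0$ over $\f_p$, which in rank $3$ is controlled by the Legendre symbol $\left(\frac{-dL}{p}\right)$ (equivalently, by whether the conic $f=0$ is isotropic over $\f_p$, which it always is, but the relevant refined count of affine points on $f=$ unit brings in this symbol). Summing the geometric-type series over levels $0,1,\dots,\lambda$ produces exactly the claimed closed form; this is essentially Lemma-type local density bookkeeping as in \cite{ki}.

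Finally, the ``in particular'' clause is immediate: if the class number of $L$ is $1$ then $\gen(L)=\{[L]\}$, so $r(K,\gen(L)) = r(K,L)$ for all $K$ (the weighting by $1/o(L)$ cancels), and in particular $r(n^2,\gen(L))=r(n^2,L)$ and $r(1,\gen(L))=r(1,L)$; substituting into the displayed identity and clearing the denominator $r(1,L)$ gives the stated formula. I expect the main obstacle to be the good-prime local density computation, specifically getting the Legendre symbol $\left(\frac{-dL}{p}\right)$ to appear with the correct sign and verifying the base case $\lambda=0$ (where the formula must collapse to $1$); once the recursion is set up correctly the rest is routine summation.
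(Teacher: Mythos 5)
Your argument is correct and follows essentially the same route as the paper: apply the Minkowski--Siegel formula, note that the ratio $r(n^2,\gen(L))/r(1,\gen(L))$ leaves the factor $n$ plus the local density ratios at primes $p\mid n$ (the ratios at $p\nmid n$ being $1$ since $n^2$ is then a unit square in $\z_p$), and evaluate $\alpha_p(n^2,L)/\alpha_p(1,L)$ in closed form at the primes $p\nmid 2dL$. The only difference is that the paper simply quotes Yang's explicit local density formula at the good primes and does the resulting algebra, whereas you rederive that formula by the standard primitive/imprimitive recursion over $\z/p^k$; that computation is routine and correct, so the proposals agree in substance.
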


\begin{proof}
The Minkowski-Siegel formula implies
$$
r(n^2,\text{gen}(L))=\pi^{\frac{3}{2}}\Gamma \left(\frac{1}{2}\right)^{-1}\cdot\frac{1}{\sqrt{dL}}\cdot n \prod_{p<\infty}\alpha_p({n^2,L_p}),
$$
where $\alpha_p$ is the local density. If $p$ does not divide $2dL$, then   by \cite {y}, 
\begin{displaymath}
\alpha_p(n^2,L)=\alpha_p(p^{2\lambda_p},L)=1+\frac{1}{p}-\frac{1}{p^{\lambda_p+1}}+\left( \frac{-dL}{p}\right) \frac{1}{p^{\lambda_p+1}}.
\end{displaymath}
The lemma follows from this. \end{proof}

\begin{rmk} {\rm
When the class number of a ternary lattice $L$ is $1$, the above lemma gives a closed formula, which is, in principle, a finite product of  local densities, for the number of representations of squares by $L$. This could be extended to higher rank cases. Let $L$ be a $\z$-lattice with $h(L)=1$. 
 If the rank of the $\z$-lattice $L$ is an odd (even) integer greater than $1$ ($2$), then we might have a closed formula of $r(n^2,L)$ ($r(n,L)$, respectively) which is  essentially given by a finite product of local densities. For example, if the rank of $L$ is $4$, then we have
$$           
r(n,L)=r(1,L)\prod_{p\mid n, p\mid 2dL} p^{\ord_p(n)}\frac{\alpha_p(n,L)}{\alpha_p(1,L)} \prod_{p\mid n, p\nmid 2dL}\frac {p^{\ord_p(n)+1}-\left(\frac{dL}p\right)^{\ord_p(n)+1}}{p-\left(\frac{dL}p\right)}.
$$
There are some articles dealing with this subject by using different methods such as $q$-series or modular forms. For example, see  \cite{aw}, \cite{c}, \cite{cly} and \cite{esy}.}
\end{rmk}

\begin{lem}\label{hecke-act}
Let $L$ be a ternary $\z$-lattice. If for any $L' \in \gen(L)$, $r(n^2,L)=r(n^2,L')$ for any integer $n$ such that every prime factor of $n$ divides $2dL$, then the genus of $L$ is indistinguishable by squares.   
\end{lem}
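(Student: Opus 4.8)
The plan is to pass to theta series of weight $\tfrac32$ and exploit the action of the Hecke operators $T_{p^2}$ at primes $p\nmid 2dL$. Write $\theta_M=\sum_{n\ge 0}r(n,M)q^n$ for $M\in\gen(L)$; each $\theta_M$ is a modular form of weight $\tfrac32$ on some $\Gamma_0(N)$ with character, where every prime dividing $N$ divides $2dL$. The first, and decisive, observation is that the hypothesis is assumed for \emph{every} $L'\in\gen(L)$, so in fact $r(m^2,M)=r(m^2,M')$ for all $M,M'\in\gen(L)$ and all $m$ whose prime factors all divide $2dL$. Hence every element of
$$
S:=\operatorname{span}_{\mathbb C}\{\theta_M-\theta_{M'}: M,M'\in\gen(L)\}
$$
has vanishing $q^{m^2}$-coefficient for all such $m$; note that $S$ is the cuspidal part of the span of the genus theta series, and that $\theta_L-\theta_{L'}$ lies in $S$ for the $L'$ we must handle.

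Next I would invoke the theory of $p$-neighbouring lattices: for $p\nmid 2dL$ the operator $T_{p^2}$ carries the theta series of a lattice to a $\mathbb C$-linear combination of theta series of lattices in the same genus, and the sum of the coefficients in this combination is independent of the lattice, so $T_{p^2}$ maps $S$ into $S$. The family $\{T_{p^2}\}_{p\nmid 2dL}$ consists of commuting diagonalizable operators, so $S=\bigoplus_\chi S_\chi$ decomposes into simultaneous eigenspaces, with $T_{p^2}$ acting on $S_\chi$ by a scalar $\lambda_p(\chi)$. Writing $g:=\theta_L-\theta_{L'}=\sum_\chi g_\chi$ with $g_\chi\in S_\chi$, the point of the first paragraph is that each $g_\chi$ itself lies in $S$, and hence already has vanishing $q^{m^2}$-coefficient for every $m$ supported on the primes dividing $2dL$.

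It remains to check that $g_\chi$ then has vanishing $q^{n^2}$-coefficient for \emph{all} $n$; this I would prove by induction on the number $k$ of prime factors of $n$ (counted with multiplicity) that do not divide $2dL$. The case $k=0$ is exactly what was obtained above. For the inductive step, write $n=p n_1$ with $p\nmid 2dL$, so that $n_1$, and $n_1/p$ when it is an integer, have fewer than $k$ prime factors outside $2dL$; taking the $q^{n_1^2}$-coefficient of the identity $g_\chi\,|\,T_{p^2}=\lambda_p(\chi)\,g_\chi$ and using Shimura's explicit formula for the action of $T_{p^2}$ on Fourier coefficients, one expresses the $q^{n^2}$-coefficient of $g_\chi$ as a linear combination of the $q^{n_1^2}$- and $q^{(n_1/p)^2}$-coefficients of $g_\chi$, all of which vanish by the inductive hypothesis (the last index being a non-integer, hence contributing $0$, when $p\nmid n_1$). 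Thus $[q^{n^2}]g_\chi=0$, and summing over $\chi$ gives $r(n^2,L)=r(n^2,L')$ for all $n$, which is the lemma.

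The part that needs genuine input, rather than formal manipulation, is the modular-forms background in the second paragraph: that $\theta_M$ is a weight-$\tfrac32$ form with level divisible only by primes dividing $2dL$, that $T_{p^2}$ preserves the span of the genus theta series for $p\nmid 2dL$ (equivalently, the $p$-neighbour relation and the constancy of its multiplicity across the genus), and the precise shape of Shimura's coefficient formula for $T_{p^2}$. Once these are in hand the rest — the passage to eigencomponents and the induction — is purely formal, and I expect the main subtlety to be exactly the bookkeeping there, together with the observation that the hypothesis must be used for \emph{all} lattices of the genus so that the vanishing of $q^{m^2}$-coefficients propagates from $\theta_L-\theta_{L'}$ to each eigencomponent $g_\chi\in S$.
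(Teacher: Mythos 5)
Your proof is correct, and it ultimately rests on the same two inputs as the paper's argument --- Andrianov's relation $\theta_M\,|\,T(p^2)=\sum_{[M']\in\gen(L)}\frac{r^*(pM',M)}{o(M')}\theta_{M'}$ for $p\nmid 2dL$ together with $\sum_{[M']}\frac{r^*(pM',M)}{o(M')}=p+1$, plus the same induction on the number of prime factors of $n$ outside $2dL$ --- but it is organized quite differently. The paper never leaves the level of representation numbers: it evaluates the Hecke identity at $n^2$, uses the hypothesis (which holds for every lattice of the genus) to collapse the genus sum on the right-hand side to $(p+1)r(n^2,K)$ for every $K\in\gen(L)$ simultaneously, and concludes that $r(p^2n^2,\cdot)$ is again constant on the genus because $r(n^2,\cdot)$ and $r(n^2/p^2,\cdot)$ are constant by induction; no cusp forms, Petersson products, or diagonalizability enter. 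You instead pass to the difference space $S$, decompose it into simultaneous $T(p^2)$-eigencomponents, and kill the square coefficients of each eigencomponent by induction via Shimura's coefficient formula. This is valid, but it needs the additional (standard) facts that $\theta_M-\theta_{M'}$ is cuspidal for genus-mates and that the operators $T(p^2)$, $p\nmid N$, are Hermitian for the Petersson product in weight $3/2$, hence simultaneously diagonalizable on the Hecke-stable subspace $S$; your remark that the hypothesis must be used for the whole genus, so that the vanishing of the relevant square coefficients passes from the generators of $S$ to each eigencomponent $g_\chi\in S$, is precisely the point that makes this scheme work. What the eigenform packaging buys is that the inductive step becomes a statement about a single form with scalar Hecke action (and would apply to any cusp form in $S$ with the given vanishing); what the paper's packaging buys is an elementary, self-contained argument in which the genus-constancy hypothesis makes the spectral decomposition unnecessary.
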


\begin{proof}
The action of Hecke operators $T(p^2)$ for any prime $p\nmid 2dL$ on theta series of  the $\z$-lattice $L$ gives
\begin{equation} 
r(p^2n,L)+\left(\frac{-ndL}{p}\right)r(n,L)+p \cdot r\left(\frac{n}{p^2},L\right)=\!\!\!\sum_{[L']\in \gen(L)} \frac{r^*(pL',L)}{o(L')}r(n,L').
\end{equation}
Here, if $p^2\nmid n$, then $r\left(\frac{n}{p^2},L\right)=0$ and 
\begin{displaymath}
r^*(pL',L) = \left\{ \begin{array}{ll}
r(pL',L)-o(L) & \textrm{if $L\simeq L'$},\\
r(pL',L) & \textrm{otherwise}.
\end{array} \right.\end{displaymath} 
For details, see \cite {an}. It is well known that
\begin{equation}
\sum_{[L']\in \gen(L)} \frac{r^*(pL',L)}{o(L')}=p+1.
\end{equation}
Assume that $r(n^2,L)=r(n^2,L')$ for any $L' \in \gen(L)$. Then by (2.2), we have 
$$
r(p^2n^2,K)=\left(p+1-\left(\frac{-n^2dL}p\right)\right)r(n^2,K)-p \cdot r\left(\frac{n^2}{p^2},K\right),
$$ 
for any $\z$-lattice $K \in \gen(L)$. 
Therefore if $n$ is not divisible by $p$, then $r(p^2n^2,L)=r(p^2n^2,L')$ for any $L' \in \gen(L)$. The lemma follows from induction on the number of prime factors not dividing $2dL$ counting multiplicity. 
\end{proof}

Now we collect some known results on the number of representations of integers by ternary quadratic forms, which are needed later.  Let $L$ be a ternary $\z$-lattice. For any prime $p$, the $\lambda_p$-transformation (or Watson transformation) is defined as follows: 
$$
\Lambda_p(L)= \{ x \in L : Q(x + z) \equiv Q(z) \  (\text{mod} \ p) \mbox{ for
all $z \in L$}\}.
$$
 Let $\lambda_p(L)$ be the primitive
lattice obtained from $\Lambda_p(L)$ by scaling $V=L\otimes \mathbb Q$ by a suitable
rational number.

\begin{lem} \label{aniso}
Let $L$ be a ternary $\z$-lattice and let $p$ be an odd prime.
If the unimodular component in a Jordan decomposition of $L_p$ is anisotropic, then
$$
r(pn,L)=r(pn,\Lambda_p(L)).
$$
\end{lem}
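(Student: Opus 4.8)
The plan is to relate the representation numbers of $pn$ by $L$ and by $\Lambda_p(L)$ via a careful analysis of $R(pn,L)$ split according to the divisibility of the coordinates of a vector by $p$. First I would recall the basic structure: for an odd prime $p$, $\Lambda_p(L)$ is a sublattice of $L$ of index $p^j$ where $j$ is the rank of the non-unimodular part of the Jordan decomposition $L_p \simeq L_0 \perp pL_1 \perp p^2 L_2$, and on $\Lambda_p(L)$ the quadratic form, after dividing by the appropriate power of $p$, yields $\lambda_p(L)$. The key point is that $x \in L$ lies in $\Lambda_p(L)$ exactly when $x$, written in a $p$-adic basis adapted to the Jordan splitting, has its "unimodular-component coordinates" divisible by $p$. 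So the strategy is to partition $R(pn,L)$ according to the image of $x$ modulo $p$ in the unimodular component $L_0$ of $L_p$.

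The main step is the following dichotomy. Take $x \in L$ with $Q(x) = pn$. Reducing modulo $p$, the component of $x$ in $L_0$ gives a vector $\bar{x}_0$ in the quadratic $\f_p$-space $\bar{L}_0$. Since $p \mid Q(x)$, the vector $\bar{x}_0$ is isotropic in $\bar{L}_0$. But $\bar{L}_0$ is exactly the reduction of the unimodular component, which by hypothesis is anisotropic; hence $\bar{x}_0 = 0$, i.e. the $L_0$-coordinates of $x$ are all divisible by $p$, so $x \in \Lambda_p(L)$. This gives an inclusion $R(pn,L) \subseteq R(pn,\Lambda_p(L))$ as sets of vectors. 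Conversely every vector of norm $pn$ in $\Lambda_p(L)$ is a fortiori a vector of norm $pn$ in $L$. Thus $R(pn,L) = R(pn,\Lambda_p(L))$ literally as subsets of $L$, once we interpret both as counting lattice vectors of the given norm; translating the norm $pn$ on $\Lambda_p(L)$ back to the normalized form on $\lambda_p(L)$ (which amounts to dividing the quadratic form on the unimodular-scaled part by $p$) matches $pn$ with $pn$ since the Watson transformation only rescales the components that the vector already avoids. So $r(pn,L) = r(pn,\Lambda_p(L))$.

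The step I expect to be the main obstacle is the bookkeeping of scales: one must check that the renormalization implicit in passing from $\Lambda_p(L)$ to $\lambda_p(L)$ does not change the value $pn$ at which we evaluate, and more precisely that the condition "$p \mid Q(x)$ forces $\bar x_0 = 0$" correctly handles the cross terms $B(x_0, x_1)$, $B(x_0,x_2)$ between Jordan components — here one uses that $B(L_0, pL_1 + p^2 L_2) \subseteq p\z_p$, so modulo $p$ only $Q(\bar x_0)$ survives in $Q(x) \bmod p$, which is what licenses the anisotropy argument. I would carry this out by fixing a $\z_p$-basis in which $L_p = L_0 \perp pL_1 \perp p^2 L_2$ is diagonalized (possible since $p$ is odd), writing $x = x_0 + x_1 + x_2$ accordingly, expanding $Q(x) = Q(x_0) + pQ'(x_1) + p^2 Q''(x_2)$ with no cross terms, and observing that $p \mid Q(x)$ iff $p \mid Q(x_0)$ iff (by anisotropy of $\bar L_0$) $p \mid x_0$. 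Then $x \in \Lambda_p(L)$ by definition of the Watson transformation, and the equality of representation numbers follows, since scaling $\Lambda_p(L)$ down by $p$ on the appropriate components sends norm $pn$ to norm $pn$ on the vectors in question and the correspondence $x \leftrightarrow x$ is a bijection.
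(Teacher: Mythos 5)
Your argument is correct, and it is essentially the standard proof: the paper itself gives no proof of Lemma \ref{aniso}, deferring to the reference \cite{co}, and the argument there is the same as yours. Your core chain is sound: localizing at $p$ and writing $L_p=L_0\perp L_1$ with $\mathfrak s(L_1)\subseteq p\z_p$, a vector $x\in L$ with $Q(x)=pn$ has $Q(x_0)\equiv 0 \pmod p$, anisotropy forces $x_0\in pL_0$, hence $B(x,L_p)\subseteq p\z_p$ (the components being orthogonal), which for odd $p$ is exactly the condition $x\in\Lambda_p(L)$; since $\Lambda_p(L)\subseteq L$ carries the restricted (unscaled) form, $R(pn,L)=R(pn,\Lambda_p(L))$ as sets of vectors and the lemma follows. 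Two small remarks. First, you silently identify ``the unimodular component is anisotropic'' with ``its reduction $\bar L_0$ over $\f_p$ is anisotropic''; this is true for odd $p$ (for rank $\le 2$, by Hensel's lemma a unimodular $\z_p$-lattice is isotropic over $\q_p$ exactly when its reduction is isotropic), but it deserves the one-line justification. Second, your digression about renormalizing to $\lambda_p(L)$ is unnecessary and, as phrased, slightly off (scaling the form would not fix the value $pn$); the lemma is stated for $\Lambda_p(L)$ itself, so the vector-level identity $R(pn,L)=R(pn,\Lambda_p(L))$ already finishes the proof and no rescaling bookkeeping is needed.
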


\begin{proof} See \cite {co}. 
\end{proof}

Now assume that the unimodular component in a Jordan decomposition of $L_p$ is isotropic. Assume that $p$ is an odd prime dividing $dL$.
Then by Weak Approximation Theorem, there exists a basis $\{x_1, x_2, x_3 \}$ for $L$ such that
$$
(B(x_i,x_j))\equiv\begin{pmatrix}0&1\\ 1&0\end{pmatrix}\perp \langle p^{\ord_p(dL)} \delta\rangle \ (\text{mod} \ p^{\ord_p(dL)+1}),
$$
where $\delta$ is an integer not divisible by $p$. We define
$$
\Gamma_{p,1}(L) = \z px_1 + \z x_2+ \z x_3, \qquad \Gamma_{p,2}(L) = \z x_1 + \z px_2+ \z x_3.
$$  
Note that $\Gamma_{p,1}(L)$ and $\Gamma_{p,2}(L)$ are unique sublattices of $L$ with index $p$ whose scale is divisible by $p$. For some properties of these sublattices of $L$, see \cite {jlo}. 

\begin{lem} \label{iso}
Under the same assumptions given above,  we have 
$$
r(pn,L)=r(pn,\Gamma_{p,1}(L))+r(pn,\Gamma_{p,2}(L))-r(pn,\Lambda_p(L)).
$$ 
\end{lem}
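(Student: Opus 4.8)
The plan is to prove the identity by inclusion--exclusion on the set $R(pn,L)$, sorting the norm-$pn$ vectors of $L$ according to which of their coordinates in the adapted basis $\{x_1,x_2,x_3\}$ are divisible by $p$. First I would reduce the given congruence for the Gram matrix modulo $p$: since $\ord_p(dL)\ge 1$, it says that $B(x_i,x_j)\equiv 0\pmod p$ for all $i,j$ except $B(x_1,x_2)\equiv 1\pmod p$. As $p$ is odd, the defining condition ``$Q(x+z)\equiv Q(z)\pmod p$ for all $z\in L$'' is equivalent to $B(x,L)\subseteq p\z$, and for $x=ax_1+bx_2+cx_3$ this holds precisely when $a\equiv b\equiv 0\pmod p$. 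Hence, as an (honest, unscaled) sublattice of $L$,
$$
\Lambda_p(L)=\z px_1+\z px_2+\z x_3=\Gamma_{p,1}(L)\cap\Gamma_{p,2}(L).
$$

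Next, for $v=ax_1+bx_2+cx_3$ the same reduction gives $Q(v)=B(v,v)\equiv 2ab\pmod p$, so if $Q(v)=pn$ then $ab\equiv 0\pmod p$, and since $p$ is odd, $p\mid a$ or $p\mid b$; equivalently $v\in\Gamma_{p,1}(L)$ or $v\in\Gamma_{p,2}(L)$. Since the quadratic form on each of $\Gamma_{p,1}(L)$, $\Gamma_{p,2}(L)$, $\Lambda_p(L)$ is simply the restriction of $Q$, we may regard $R(pn,\Gamma_{p,i}(L))$ and $R(pn,\Lambda_p(L))$ as subsets of $R(pn,L)$, and the above observations read
$$
R(pn,L)=R(pn,\Gamma_{p,1}(L))\cup R(pn,\Gamma_{p,2}(L)),\qquad
R(pn,\Gamma_{p,1}(L))\cap R(pn,\Gamma_{p,2}(L))=R(pn,\Lambda_p(L)).
$$
Taking cardinalities yields $r(pn,L)=r(pn,\Gamma_{p,1}(L))+r(pn,\Gamma_{p,2}(L))-r(pn,\Lambda_p(L))$, as claimed.

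The argument is essentially bookkeeping, so I do not expect a serious obstacle; the one place to be careful is conventions — that $\Lambda_p(L)$ here denotes the sublattice $\{x\in L:B(x,L)\subseteq p\z\}$ of $L$ itself (not its primitive rescaling $\lambda_p(L)$), that this sublattice coincides exactly with $\Gamma_{p,1}(L)\cap\Gamma_{p,2}(L)$, and that counting norm-$pn$ vectors inside each of the three index-$p$ sublattices literally computes the lattice-theoretic quantity $r(pn,\cdot)$. Once these identifications are pinned down, the three steps above complete the proof.
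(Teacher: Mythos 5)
Your proof is correct, and it is genuinely different in character from what the paper does: the paper gives no argument at all for this lemma, simply citing Proposition 4.1 of \cite{jlo} (a separate paper on a generalization of the Watson transformation), whereas you supply a self-contained proof. Your route is the natural one: reducing the adapted Gram matrix mod $p$ shows $Q(ax_1+bx_2+cx_3)\equiv 2ab \pmod p$, so every vector of norm $pn$ has $p\mid a$ or $p\mid b$, i.e.\ lies in $\Gamma_{p,1}(L)\cup\Gamma_{p,2}(L)$; and for odd $p$ the condition defining $\Lambda_p(L)$ is indeed equivalent to $B(x,L)\subseteq p\z$, which in this basis forces $p\mid a$ and $p\mid b$, giving $\Lambda_p(L)=\Gamma_{p,1}(L)\cap\Gamma_{p,2}(L)$ as an unscaled sublattice of $L$. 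Since all three are sublattices of $L$ carrying the restriction of $Q$, the identity is literally inclusion--exclusion on the finite set $R(pn,L)$. You were also right to flag, and correctly resolve, the one notational trap: the lemma is stated for $\Lambda_p(L)$ itself, not its primitive rescaling $\lambda_p(L)$, so no scaling of the form is involved. What the paper's citation buys is generality and economy — \cite{jlo} develops the $\Gamma_{p,i}$-transformations systematically (including their uniqueness as index-$p$ sublattices of scale divisible by $p$ and their interaction with the Anzahlmatrix machinery used later) — while your argument buys transparency and independence from an unpublished reference; for the lemma as stated, your two pages of bookkeeping fully suffice.
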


\begin{proof}
See Proposition 4.1 of \cite{jlo}.
\end{proof}

%%%%%%%%%%%%%%%%%%%%%%%%%%%%%%%%%%%%%%%%%%%%%%%%%%%%%%%%%%%%%%%%%%%%%%%%%%%%%%%%%%
%%%%%%%%%%%%%%%%%%%%%%%%%%%%%%%%%%%%%%%%%%%%%%%%%%%%%%%%%%%%%%%%%%%%%%%%%%%%%%%%%
\section{Factors of the number of representations of squares}
%%%%%%%%%%%%%%%%%%%%%%%%%%%%%%%%%%%%%%%%%%%%%%%%%%%%%%%%%%%%%%%%%%%%%%%%%%%%%%%%%
%%%%%%%%%%%%%%%%%%%%%%%%%%%%%%%%%%%%%%%%%%%%%%%%%%%%%%%%%%%%%%%%%%%%%%%%%%%%%%%%%

Let $L$ be a ternary $\z$-lattice whose genus is indistinguishable by squares. Then Lemma \ref{important-l} gives a closed formula on $r(n^2,L)$. In this section, we resolve the conjecture given by Cooper and Lam in \cite {cl} by using this observation.     

\begin{table}[t]
\caption{Data for Conjecture 3.1}
\centering
\begin{tabular}{c l}
\hline
$b$ &$\quad c$ \\
\hline
1 &\quad 1, 2, 3, 4, 5, 6, 8, 9, 12, 21, 24\\
2 &\quad 2, 3, 4, 5, 6, 8, 10, 13, 16, 22, 40, 70 \\
3 &\quad 3, 4, 5, 6, 9, 10, 12, 18, 21, 30, 45\\
4 &\quad 4, 6, 8, 12, 24\\
5 &\quad 5, 8, 10, 13, 25, 40 \\
6 &\quad 6, 9, 16, 18, 24\\
8 &\quad 8, 10, 13, 16, 40\\
9 &\quad 9, 12, 21, 24\\
10 &\quad 30\\
12 &\quad 12\\
16 &\quad 24\\
21 &\quad 21\\
24 &\quad 24\\
\hline
\end{tabular}
\end{table}

\begin{conj} \label{conj}
Let $b$ and $c$ be any integers given in Table 1. Let $n$ be a positive integer and let $\lambda_p=\ord_p(n)$ for any prime  $p$.  
Then the number $r(n^2,\langle 1,b,c\rangle)$ of the integer solutions of the diophantine equation
$$
n^2=x^2+by^2+cz^2
$$ 
is given by the formula of the type
$$
r(n^2,\langle1,b,c\rangle)=\left( \prod_{p \mid 2bc}g(b,c,p,\lambda_p)\right)\left( \prod_{p \nmid 2bc}h(b,c,p,\lambda_p)\right),
$$
where 
\begin{equation}
h(b,c,p,\lambda_p)=\frac{p^{\lambda_p +1}-1}{p-1}-\left(\frac{-bc}{p}\right)\frac{p^{\lambda_p}-1}{p-1}
\end{equation}
and $g(b,c,p,\lambda_p)$ has to be determined on an individual and case-by-case basis.
\end{conj}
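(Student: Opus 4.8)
The plan is to obtain Conjecture~\ref{conj} from the results of Section~2 together with a finite lattice-theoretic computation. The key point is that, for a lattice $L$ that represents $1$, the asserted factorization is just Lemma~\ref{important-l} read backwards. Indeed, if $\gen(\langle1,b,c\rangle)$ is indistinguishable by squares then every member of the genus has the same value $r(n^2,\cdot)$, so this common value equals the weighted average $r(n^2,\gen(\langle1,b,c\rangle))$, and in particular $r(n^2,\langle1,b,c\rangle)=r(n^2,\gen(\langle1,b,c\rangle))$; the right-hand side is evaluated by Lemma~\ref{important-l} applied with $L=\langle1,b,c\rangle$ (note that $1$ is represented by $L$, hence by $\gen(L)$). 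Since $dL=bc$, the factors over the primes $p\nmid 2bc$ are exactly the numbers $h(b,c,p,\lambda_p)$ of the statement, while for $p\mid 2bc$ one sets
$$
g(b,c,p,\lambda_p)=p^{\lambda_p}\,\frac{\alpha_p(n^2,L)}{\alpha_p(1,L)}.
$$
This depends only on $\lambda_p=\ord_p(n)$: the unit part of $n^2$ is a square unit (and for $p=2$ the odd part of $n^2$ is $\equiv 1\pmod 8$), so $\alpha_p(n^2,L)$ only feels the $p$-power $p^{2\lambda_p}$. Finally $g(b,c,p,0)=h(b,c,p,0)=1$, so the two products may indeed be extended over all $p\mid 2bc$ and all $p\nmid 2bc$. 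Hence the whole conjecture reduces to the assertion that $\gen(\langle1,b,c\rangle)$ is indistinguishable by squares for each of the $64$ pairs $(b,c)$ in Table~1.

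For the pairs with $h(\langle1,b,c\rangle)=1$ this is trivial. For the remaining ones I would apply Lemma~\ref{hecke-act}, which reduces the problem to checking that $r(n^2,L)=r(n^2,L')$ for every $L'\in\gen(L)$ and every $n$ whose prime divisors all divide $2bc$; only finitely many ramified primes are now involved. To settle this finite family of identities I would descend along Watson transformations at the primes $p\mid 2bc$: for odd $p$ with anisotropic unimodular Jordan component at $p$, Lemma~\ref{aniso} turns $r(n^2,L)$ (with $p\mid n$) into $r(n^2,\Lambda_p(L))$, and for isotropic unimodular component Lemma~\ref{iso} expresses it through $r(n^2,\Gamma_{p,1}(L))$, $r(n^2,\Gamma_{p,2}(L))$ and $r(n^2,\Lambda_p(L))$; the prime $2$ is handled by the analogous classical transformation. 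As $\lambda_p$ (respectively the $\Gamma_{p,i}$) sends $\gen(L)$ into a genus of strictly smaller discriminant, the descent terminates, and one arranges the order of transformations so that the lattices reached at the bottom have class number $1$ --- or lie in genera for which the required identity has already been proved. Representation numbers of class-number-one lattices are genus invariants, so climbing back up the tree yields $r(n^2,L)=r(n^2,L')$.

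The main obstacle is the sheer bookkeeping of this descent across all $64$ cases: for each $(b,c)$ one must determine $\gen(\langle1,b,c\rangle)$ together with its class number, decide at each node which prime and which of Lemmas~\ref{aniso} and \ref{iso} to use, and --- in the isotropic case, where the Watson map need not be injective on classes and the pieces $\Gamma_{p,1},\Gamma_{p,2}$ split off --- verify that the difference $r(n^2,L)-r(n^2,L')$ really collapses to $0$ at the foot of the tree. The prime $2$ is the most delicate point, both in the reduction step and in the local-density computation that defines $g(b,c,2,\lambda_2)$. I would expect that a few pairs do not descend cleanly to class number $1$ and instead require a direct comparison of representation numbers inside a two- or three-class genus, performed with the explicit Gram matrices of the classes.
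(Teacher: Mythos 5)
The first half of your argument (indistinguishability of $\gen(\langle1,b,c\rangle)$ implies the conjectured product formula, with $g(b,c,p,\lambda_p)=p^{\lambda_p}\alpha_p(n^2,L)/\alpha_p(1,L)$ for $p\mid 2bc$) is sound and is exactly the easy direction of Theorem \ref{interesting} combined with Lemma \ref{important-l}. The fatal step is the reduction ``hence the whole conjecture reduces to the assertion that $\gen(\langle1,b,c\rangle)$ is indistinguishable by squares for each of the $64$ pairs,'' followed by a plan to \emph{prove} indistinguishability in every class-number~$>1$ case. That assertion is false: for $(b,c)=(3,5),(3,21),(3,45)$ the genus has class number two and $r(4,\ell_{3,k})=6\neq 2=r(4,m_{3,k})$, so these genera are not indistinguishable by squares (Remark 3.8 of the paper), and no amount of Watson descent or direct comparison can prove otherwise. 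Worse, the conjecture itself fails for these pairs, so the statement you are trying to prove is not a theorem: for $\langle1,3,5\rangle$ one has $r(4)=6$ and $h(3,5,7,1)=8-\left(\frac{-15}{7}\right)=9$, so the conjectured formula forces $r(14^2)=r(4)\cdot h(3,5,7,1)=54$, whereas a direct count gives $r(196,\langle1,3,5\rangle)=38$. This is precisely what the paper's structure detects: by Theorem \ref{interesting} together with the class-number-two remark, if $\ell_{3,k}$ satisfied the conjectured condition then so would $m_{3,k}$ and the genus would be indistinguishable, contradicting $r(4,\ell_{3,k})\neq r(4,m_{3,k})$. ``Resolving the conjecture completely'' in the paper means proving it for the remaining pairs and refuting it for these three (only the identity $r(n^2,\ell_{3,k})=r(n^2,m_{3,k})$ for odd $n$ survives, via Lemma \ref{odd}).

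For the genuinely indistinguishable class-number-two cases your descent sketch is also thinner than what is actually needed: Lemmas \ref{aniso} and \ref{iso} are stated only for odd primes, and the delicate prime is $2$, which the paper does not treat by ``the analogous classical transformation'' but by explicit bijections between representation sets (for $(2,40)/(8,10)$, $(2,22)$, $(2,70)$), by a combination of $\Gamma_{2,i}$/$\Lambda$-type identities linking the two classes (for $(2,13)$, $(8,13)$), and by a congruence argument plus anisotropy at $5$ and $13$ (for $(5,13)$), in each case finishing with Lemma \ref{hecke-act}. So even after excising the three false cases, the proposal would still need these case-specific $2$-adic arguments rather than a uniform descent to class number one.
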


Cooper and Lam verified the conjecture when 
$$
(b,c)=(1,1), (1,2), (1,3), (2,2), (3,3) 
$$
by using some identities on $q$-series. In 2014, Guo, Peng and Qin  in \cite{gpq} verified the conjecture when 
$$
(b,c)=(1,4), (1,5), (1,6), (1,8), (2,3), (2,4), (2,6), (3,6), (4,4), (4,8), (5,5), (6,6)
$$  
by using modular forms of weight $3/2$. 

In fact, by Lemma \ref{important-l}, the conjecture holds when the class number of $\ell_{b,c}=\langle 1,b,c\rangle$ is one. In this case, we have 
$$
\prod_{p\mid 2bc}g(b,c,p,\lambda_p)=r(1,\ell_{b,c})\prod_{p\mid 2bc}p^{\lambda_p}\cdot \frac{\alpha_{p}(n^2,\ell_{b,c})}{\alpha_{p}(1,\ell_{b,c})}.
$$
Therefore we may assume that the class number of $\ell_{b,c}$ is greater than $1$, that is,
$$
(b,c)=(2,13), (2,22), (2,40), (2,70), (3,5), (3,21), (3,45), (5,13), (8,10) \ \ \text{and} \ \ (8,13).
$$
In fact, one may easily verify that $h(\ell_{b,c})=2$ in all cases given  above. In these cases, we define the other $\z$-lattice in the genus of $\ell_{b,c}$ by $m_{b,c}$.

Let $L$ be any ternary $\z$-lattice which is not necessarily diagonal.
We say $L$ {\it satisfies the condition in Conjecture \ref{conj}} if $r(n^2,L)$ has the same closed formula in the conjecture.  
Note that $bc$ in (3.1) should be replaced by $dL$ in general case. 

Suppose that $L$ satisfies the condition in Conjecture \ref{conj}. 
For any integer $n$, let $P(n)$ be the set of prime factors of $n$. 
Let $n=n_1n_2$, where $P(n_1)\subset P(2dL)$ and $(n_2,2dL)=1$. Define $\lambda_p=\ord_p(n)$ for any prime $p$.  Since
$$
r(n_1^2n_2^2,L)=\prod_{p\mid2dL}g_p(L_p,\lambda_p)\prod_{p\nmid2dL}h_p(L_p,\lambda_p),
\quad r(n_1^2,L)=\prod_{p\mid2dL}g_p(L_p,\lambda_p),
$$
where 
$$
h_p(L_p,\lambda_p)=\frac{p^{\lambda_p +1}-1}{p-1}-\left(\frac{-dL}{p}\right)\frac{p^{\lambda_p}-1}{p-1},
$$
we have
\begin{equation}
r(n_1^2n_2^2,L)=r(n_1^2,L)\prod_{p\nmid2dL}h_p(L_p,\lambda_p).
\end{equation}
Conversely, if $L$ satisfies the condition (3.2), then $L$ satisfies the condition in Conjecture \ref{conj}.

\begin{thm}\label{interesting} Let $L$ be a ternary $\z$-lattice.    
Every $\z$-lattice in the genus of $L$ satisfies the condition in Conjecture \ref{conj} if and only if $\gen(L)$ is indistinguishable by squares.
\end{thm}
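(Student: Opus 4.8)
\medskip
\noindent\textbf{Proof proposal.}
The plan is to reduce both implications to one fact --- that the genus average $r(n^2,\gen(L))$ \emph{always} has exactly the shape of the formula in Conjecture~\ref{conj}, the factors at the primes $p\mid 2dL$ being intrinsic $p$-adic quantities --- after which the equivalence is nearly formal, since any two lattices in $\gen(L)$ have isometric localizations and equal discriminant. Recall from the discussion preceding the theorem that, for a ternary $\z$-lattice $M$, the assertion that $M$ satisfies the condition in Conjecture~\ref{conj} is equivalent to the identity $r(n^2,M)=r(n_1^2,M)\prod_{p\nmid 2dM}h_p(M_p,\lambda_p)$, where $n=n_1n_2$ with $P(n_1)\subset P(2dM)$ and $(n_2,2dM)=1$, $\lambda_p=\ord_p(n)$, and each of $g_p(M_p,\lambda_p)$, $h_p(M_p,\lambda_p)$ depends only on the localization $M_p$ and on $\lambda_p$.

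\medskip
\noindent\textbf{Step 1: the genus average.}
First I would show that $r(n^2,\gen(L))=\prod_{p\mid 2dL}g_p(L_p,\lambda_p)\prod_{p\nmid 2dL}h_p(L_p,\lambda_p)$. By the Minkowski--Siegel formula, $r(n^2,\gen(L))=\pi^{3/2}\Gamma(1/2)^{-1}(dL)^{-1/2}\,n\prod_{p<\infty}\alpha_p(n^2,L_p)$. Writing $n=p^{\lambda_p}u_p$ with $u_p\in\z_p^{\times}$, multiplication by $u_p$ is a bijection of $L_p/p^kL_p$ carrying $\{x:Q(x)\equiv p^{2\lambda_p}\}$ onto $\{x:Q(x)\equiv n^2\}$, so $\alpha_p(n^2,L_p)=\alpha_p(p^{2\lambda_p},L_p)$ for every $p$; moreover, for $p\nmid 2dL$, the local density formula in \cite{y} together with the evaluation underlying Lemma~\ref{important-l} gives $p^{\lambda_p}\alpha_p(p^{2\lambda_p},L_p)=h_p(L_p,\lambda_p)\,\alpha_p(1,L_p)$. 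Splitting $\prod_{p<\infty}$ according to whether $p\mid 2dL$, distributing the prime powers of $n=\prod_p p^{\lambda_p}$ accordingly, and absorbing the genus-invariant convergent constant $\pi^{3/2}\Gamma(1/2)^{-1}(dL)^{-1/2}\prod_{p\nmid 2dL}\alpha_p(1,L_p)$ into the factor at $p=2$, one obtains the claimed product, with $g_p(L_p,\lambda)=p^{\lambda}\alpha_p(p^{2\lambda},L_p)$ (times that constant when $p=2$). No hypothesis on representability of $1$ is needed: if $1$ is not represented by $\q\otimes L$ then, by Lemma~\ref{space}, $\alpha_{p_0}(1,L_{p_0})=0$ for some $p_0\mid 2dL$, so $g_{p_0}\equiv 0$ and both sides vanish.

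\medskip
\noindent\textbf{Step 2: the two directions, and the main obstacle.}
For ``$\Leftarrow$'': if $\gen(L)$ is indistinguishable by squares, then $r(n^2,L')=r(n^2,\gen(L))$ for every $L'\in\gen(L)$ and every $n$, and by Step~1 this equals $\prod_{p\mid 2dL}g_p(L'_p,\lambda_p)\prod_{p\nmid 2dL}h_p(L'_p,\lambda_p)$ (using $L'_p\simeq L_p$ and $dL'=dL$), so $L'$ satisfies the condition in Conjecture~\ref{conj}. For ``$\Rightarrow$'': if every $L'\in\gen(L)$ satisfies the condition, then $r(n^2,L')=\prod_{p\mid 2dL}g_p(L'_p,\lambda_p)\prod_{p\nmid 2dL}h_p(L'_p,\lambda_p)$, which --- again because $L'_p\simeq L_p$ for all $p$ and $dL'=dL$ --- is independent of the choice of $L'\in\gen(L)$; hence $r(n^2,L')=r(n^2,L)$ for all $n$, i.e.\ $\gen(L)$ is indistinguishable by squares. (Alternatively, peeling off the common factor $\prod_{p\nmid 2dL}h_p(\cdot_p,\lambda_p)$ reduces ``$\Rightarrow$'' to matching $r(n^2,\cdot)$ for $n$ with $P(n)\subset P(2dL)$, where $r(n^2,L')=\prod_{p\mid 2dL}g_p(L'_p,\lambda_p)=\prod_{p\mid 2dL}g_p(L_p,\lambda_p)=r(n^2,L)$, after which Lemma~\ref{hecke-act} handles the general $n$.) I expect Step~1 to be the only genuine obstacle: recognizing the local-density product $n\prod_p\alpha_p(n^2,L_p)$ furnished by Minkowski--Siegel as \emph{precisely} the Cooper--Lam shape, which hinges on the splitting $n=n_1n_2$, the identity $\alpha_p(n^2,L_p)=\alpha_p(p^{2\lambda_p},L_p)$, and the explicit evaluation at $p\nmid 2dL$; everything afterward is forced by the fact that lattices in a genus share all localizations and the discriminant, the sole remaining case (no square represented) being dealt with in Step~1.
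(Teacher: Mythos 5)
Your ``$\Leftarrow$'' half (indistinguishable implies every class satisfies the condition) is essentially correct and close in spirit to the paper: your Step~1 is Lemma~\ref{important-l} in unnormalized form (Minkowski--Siegel together with Yang's evaluation of $\alpha_p$ at $p\nmid 2dL$), and identifying $r(n^2,L')$ with the genus average then exhibits the Cooper--Lam shape; the paper instead compares the ratios $r(n_1^2n_2^2,\gen(L))/r(n_1^2,\gen(L))$ and disposes of the case $r(n_1^2,L)=0$ with Lemma~\ref{hecke-act}, but your variant of this direction is fine.

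The ``$\Rightarrow$'' direction, however, has a genuine gap, and it sits exactly where the theorem's content lies. Satisfying the condition in Conjecture~\ref{conj} is, as the paper makes precise in (3.2), the identity $r(n_1^2n_2^2,L')=r(n_1^2,L')\prod_{p\nmid 2dL}h_p(L_p,\lambda_p)$: the factor attached to the primes dividing $2dL$ is the \emph{global} representation number $r(n_1^2,L')$, not a canonical local quantity $g_p(L'_p,\lambda_p)$ determined by the localization (the conjecture expressly leaves $g$ to be determined case by case). Your argument assumes these $g$-factors depend only on $L'_p$ and $dL'$, hence are genus invariants, and from this ``concludes'' $r(n^2,L')=r(n^2,L)$ --- but that is precisely what must be proved; localizations and discriminant never determine representation numbers within a genus (e.g.\ $r(4,\ell_{3,5})=6\ne 2=r(4,m_{3,5})$). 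Your parenthetical alternative has the same flaw: for $n=n_1$ with $P(n_1)\subset P(2dL)$ the hypothesis (take $n_2=1$) imposes no constraint at all, so the equality $r(n_1^2,L')=r(n_1^2,L)$ cannot be read off from it. The paper closes this gap with a substantive argument: for $p\nmid 2dL$ the hypothesis converts the Hecke relation (2.1) into the statement that $(r(n_1^2,L_1),\dots,r(n_1^2,L_h))$ is an eigenvector of the transposed Anzahlmatrix $\pi_p(L)$ with eigenvalue $p+1$; since the entries are nonnegative and the row sums equal $p+1$ by (2.2), a maximal coordinate forces $r(n_1^2,L_i)=r(n_1^2,L_k)$ whenever $r^*(pL_i,L_k)\ne 0$, and the class linkage theorem of Hsia--J\"ochner--Shao \cite{hjs} supplies, for every $i$, a prime $p\nmid 2dL$ with $r^*(pL_i,L_k)\ne 0$; Lemma~\ref{hecke-act} then upgrades equality at such $n_1$ to all squares. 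Your proposal contains no substitute for this step.
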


\begin{proof}
Suppose that $r(n^2,L)=r(n^2,L')$ for any integer $n$ and any $\z$-lattice $L'$ in the genus of $L$. Let $n=n_1n_2$, where $P(n_1)\subset P(2dL)$ and $(n_2,2dL)=1$. First assume that $r(n_1^2,L)\ne 0$. By Minkowski-Siegel formula, we have
$$
\frac{r(n_1^2n_2^2,L')}{r(n_1^2,L')}=\frac{r(n_1^2n_2^2,\gen(L))}{r(n_1^2,\gen(L))}=\!\!
\prod_{p\mid 2dL}\frac{\alpha_p(n_1^2n_2^2,L_p')}{\alpha_p(n_1^2,L_p')}\!\!\prod_{p\nmid 2dL}h_p(L_p,\lambda_p)=\!\!\!\prod_{p\nmid 2dL}h_p(L_p,\lambda_p),
$$
for any $\z$-lattice $L' \in \gen(L)$. Hence we have 
$$
r(n_1^2n_2^2,L')=r(n_1^2,L')\prod_{p\nmid2dL}h(L_p,\lambda_p).
$$
Now assume that $r(n_1^2,L)=0$. Then by Lemma \ref{hecke-act}, $r(n_1^2n_2^2,L')=0$ for any $L'\in \gen(L)$. 
Therefore $L'$ satisfies the condition in Conjecture \ref{conj} for any $L'\in \gen(L)$.

Conversely, suppose that every $\z$-lattice in the genus of $L$ satisfies the condition in Conjecture \ref{conj}. Let 
$\gen(L)=\{[L]=[L_1], [L_2],\dots, [L_h]\}$. Let $n_1$ be any integer such that $P(n_1) \subset P(2dL)$.
 Note that for any prime $p\nmid 2dL$ and any integer $i\in \{1,2,\dots, h\}$,
$$
r(p^2n_1^2,L_i)+\left(\frac{-dL}{p}\right)r(n_1^2,L_i)=\sum_{[L']\in \gen(L)} \frac{r^*(pL',L_i)}{o(L')}r(n_1^2,L').
$$ 
Since $r(p^2n_1^2,L_i)=\left(p+1-\left(\frac{-dL}{p}\right)\right)r(n_1^2,L_i)$ by the assumption, we have 
\begin{displaymath}
\pi_p(L)\left(\begin{array}{c} r(n_1^2,L_1) \\ \vdots\\r(n_1^2,L_h)\end{array} \right)=(p+1)\left(\begin{array}{c} r(n_1^2,L_1) \\ \vdots\\r(n_1^2,L_h)\end{array} \right),
\end{displaymath}
where $\pi_p(L)=\left(\frac{r^*(pL_j,L_i)}{o(L_j)}\right)$ is the transpose of the Eichler's Anzahlmatrix of $L$ at $p$ (see \cite{jlo}).
This implies that $\pi_p(L)$ has an eigenvalue $p+1$ corresponding to the eigenvector $(r(n_1^2,L_1),\dots, r(n_1^2,L_h))$. Assume that 
$$
r(n_1^2,L_k)=\max (r(n_1^2,L_1),r(n_1^2,L_2),\dots,r(n_1^2,L_h)).
$$ 
Then
$$
\begin{array} {rl}
(p+1)r(n_1^2,L_k)&\!\!\!=\displaystyle \sum_{i=1}^{h}\frac{r^*(pL_i,L_k)}{o(L_i)}r(n_1^2,L_i)\\
&\hskip 5pc \le\displaystyle\sum_{i=1}^{h}\frac{r^*(pL_i,L_k)}{o(L_i)}r(n_1^2,L_k)=(p+1)r(n_1^2,L_k).
\end{array}
$$
This implies that
$$
\frac{r^*(pL_i,L_k)}{o(L_i)}r(n_1^2,L_i)=\frac{r^*(pL_i,L_k)}{o(L_i)}r(n_1^2,L_k).
$$
Now by class linkage Theorem proved in \cite {hjs}, for any integer $i=1,2,\dots,h$,  there is a prime $p\nmid 2dL$ such that $r^*(pL_i,L_k)\ne 0$. This implies that $r(n_1^2,L_i)=r(n_1^2,L_k)$. The theorem follows from this by Lemma \ref{hecke-act}.  \end{proof}

\begin{rmk} 
{\rm Assume that the class number of a ternary $\z$-lattice $L$ is two. Then one may easily show that if $L$ satisfies the condition in Conjecture \ref{conj}, then so is the other $\z$-lattice in the genus of $L$.}
\end{rmk} 

From now on, we prove the conjecture when the class number of the $\z$-lattice $\ell_{b,c}$ is two. In fact, we will show that
each genus of $\ell_{b,c}$ is indistinguishable by squares except the cases when $(b,c)=(3,5), (3,21)$ and $(3,45)$. In the exceptional cases, we show that $r(n^2,\ell_{b,c})= r(n^2,m_{b,c})$ only when $n$ is odd.

%%%%%%%%%%%%%%%%%%%%%%%%%%%%%%%%%%%%%%%%%%%%%%%%%%%%%%%%%%%%%%%%%%%%%%%%%%%%%%%%%%%
%%%%%%%%%%%%%%%%%%%%%%%%%%%%%%%%%%%%%%%%%%%%%%%%%%%%%%%%%%%%%%%%%%%%%%%%%%%%%%%%%%%

\begin{thm} \label{easy-1} The genera  $\gen(\ell_{2,40})$,  $\gen(\ell_{2,22})$ and  
 $\gen(\ell_{2,70})$ are all indistinguishable by squares.  
\end{thm}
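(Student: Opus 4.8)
The plan is to verify the criterion of Lemma~\ref{hecke-act}: it suffices to show $r(n^{2},\ell_{b,c})=r(n^{2},m_{b,c})$ for every positive integer $n$ whose prime divisors all lie in $P(2d\ell_{b,c})$. Here $d\ell_{2,40}=80$, $d\ell_{2,22}=44$ and $d\ell_{2,70}=140$, so these prime sets are $\{2,5\}$, $\{2,11\}$ and $\{2,5,7\}$, and in each case every odd prime dividing the discriminant divides it exactly once. I would first remove the odd prime factors by Watson transformations, reducing to the arguments $n=2^{a}$, and then treat $r(4^{a},\cdot)$ directly at the prime $2$.

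For an odd prime $p$ with $p\parallel d\ell_{b,c}$, the unimodular Jordan component of $(\ell_{b,c})_{p}$ is $\langle1,2\rangle$, which is $\q_{p}$-anisotropic for $p=5,7$ (since $-2$ is a nonresidue mod $5$ and mod $7$) and $\q_{p}$-isotropic for $p=11$ (since $-2\equiv9\pmod{11}$); the same holds for $m_{b,c}$, which is locally isometric to $\ell_{b,c}$ everywhere. So Lemma~\ref{aniso} applies at $p=5$ (for $\ell_{2,40},\ell_{2,70}$) and at $p=7$ (for $\ell_{2,70}$), and Lemma~\ref{iso} applies at $p=11$ (for $\ell_{2,22}$). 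One computes directly that $\lambda_{p}(\langle1,2,c\rangle)=\langle p,2p,c/p\rangle$ and $\lambda_{p}(\langle p,2p,c/p\rangle)=\langle1,2,c\rangle$, so $\lambda_{p}$ is of period two on $\gen(\ell_{b,c})$, sending $m_{b,c}$ to the other class in $\gen(\lambda_{p}(\ell_{b,c}))$; when $p\mid n$ one applies the relevant lemma twice to peel the factor $p^{2}$ off $n^{2}$ and land back in $\gen(\ell_{b,c})$. Iterating over all odd prime divisors of $n$, and noting that in the isotropic case the extra terms produced by Lemma~\ref{iso} involve the rescalings $\gamma_{11,i}(\ell_{2,22})$ of discriminant $d\ell_{2,22}/11=4$, hence of class number one — so their representation numbers are genus invariants (Lemma~\ref{important-l}) and, by the properties of $\Gamma_{p,i}$ in \cite{jlo}, equal the corresponding terms for $m_{2,22}$ — one reduces the whole statement to
$$
r(4^{a},\ell_{b,c})=r(4^{a},m_{b,c})\qquad(a\ge0).
$$

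For $a=0,1$ both sides equal $2$ by inspection ($\ell_{b,c}$ and $m_{b,c}$ each represent $1$ because the genus does, and $b,c\ge2$). For $a\ge2$, the congruence $4^{a}\equiv0\pmod 8$ forces any $v\in\ell_{b,c}$ with $Q(v)=4^{a}$ to lie in $L_{0}:=\{v:Q(v)\in 8\z\}$ — and, the key point, $L_{0}$ is a sublattice, because $2$-adically it is the preimage of a subgroup of $(\z/2)^{3}$ (for $c\equiv0\pmod 8$ the conditions are that one coordinate be divisible by $4$ and another by $2$; for $c=22,70$, where $c\equiv6\pmod 8$, the second is replaced by a congruence linking two coordinates mod $2$, exactly as in the Remark on $\langle2,3,24\rangle$ after Lemma~\ref{space}). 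Rescaling, $Q|_{L_{0}}=8Q'$ for a ternary form $\ell'_{b,c}$ of discriminant $d\ell_{b,c}/8$, so $r(4^{a},\ell_{b,c})=r(2^{2a-3},\ell'_{b,c})$; the same applies to $m_{b,c}$ with identical localizations at every prime, so $\ell'_{b,c}$ and $m'_{b,c}$ lie in one genus. For $(b,c)=(2,40)$ that genus is $\gen(\langle1,2,5\rangle)$, of class number one, and Lemma~\ref{important-l} closes the argument; for $c\equiv6\pmod 8$ one iterates the same $2$-adic reduction — the discriminant strictly decreasing each time — until a class-number-one genus is reached.

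The conceptual frame is uniform and short once the period-two property of $\lambda_{p}$ and the class-number-one facts are granted; the actual work, and the main obstacle, is the $2$-adic bookkeeping of the last paragraph: pinning down $L_{0}$ and its rescaling for each form, checking that the resulting genus is (or quickly becomes) class number one, and verifying that the reductions of $\ell_{b,c}$ and of $m_{b,c}$ remain in lockstep — and here the three forms genuinely differ, according as $c\equiv0$ or $c\equiv6\pmod 8$.
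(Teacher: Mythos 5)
Your overall strategy (reduce via Lemma~\ref{hecke-act} to integers supported on $2dL$, remove the odd primes $5,7,11$ with Lemmas~\ref{aniso} and \ref{iso}, then fight the prime $2$ by hand) is coherent, but the part that carries the real weight is precisely the part you have not done, and one of its stated steps is false as written. For $\ell_{2,40}$ your $2$-adic reduction does work: all solutions of $x^2+2y^2+40z^2\equiv 0\pmod 8$ have $4\mid x$, $2\mid y$, and both $\ell_{2,40}$ and $\ell_{8,10}$ reduce to the \emph{same} form $\langle 1,2,5\rangle$ (so you do not even need its class number). But for $c=22,70$, where $c\equiv 6\pmod 8$, the sublattice $L_0$ is $\{4\mid x,\ y\equiv z\pmod 2\}$, and $Q|_{L_0}/8$ is, e.g.\ for $\ell_{2,22}$, the form $2a^2+3b^2+11c^2+11bc$: it is integer-valued but not classically integral, and its ``discriminant $d\ell_{b,c}/8$'' is $44/8$, not an integer. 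So the claim ``$Q|_{L_0}=8Q'$ for a ternary form of discriminant $d\ell_{b,c}/8$'' fails in exactly the two cases you propose to settle by ``iterating until a class-number-one genus is reached''; that iteration is never set up (you would have to re-prove that the relevant congruence locus is again a sublattice for these half-integral forms, identify the genera of the reductions of $\ell_{b,c}$ and $m_{b,c}$, and verify the class-number claims), so the $2$-part of the argument for $\ell_{2,22}$ and $\ell_{2,70}$ is a genuine gap. The $p=11$ step for $\ell_{2,22}$ also leans on two unverified assertions: that the rescaled $\Gamma_{11,i}(\ell_{2,22})$ and $\Gamma_{11,i}(m_{2,22})$ lie in common class-number-one genera of discriminant $4$, and that this forces the $\Gamma$-terms in Lemma~\ref{iso} to match between the two classes; the $\Lambda_{11}$-terms can indeed be handled by induction as you indicate, but the $\Gamma$-matching is asserted by appeal to unspecified ``properties in \cite{jlo}''.

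For contrast, the paper disposes of all three genera at once with an elementary observation: for every $n\equiv 0,1\pmod 4$ (hence for every square) there are explicit coordinate bijections, e.g.\ $f_n(x,y,z)=(x,2y,z/2)$ from $R(n,\ell_{8,10})$ to $R(n,\ell_{2,40})$, and similarly $(x,y,z)\mapsto(x,2z,(y-z)/2)$ and $(x,y,z)\mapsto(x,(y-z)/2,2z)$ for the other two genera; well-definedness is a one-line congruence check (a coordinate is forced to be even because $n\not\equiv 2,3\pmod 4$), and no Hecke operators, Watson transformations, or local densities are needed. The machinery you invoke is essentially what the paper reserves for the genuinely harder genera $\ell_{2,13}$, $\ell_{8,13}$, $\ell_{5,13}$. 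If you want to salvage your route, the cleanest fix is to replace the iterated $8$-divisibility reduction by a direct change of variables of the above type; as submitted, the proof is incomplete.
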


\begin{proof} Note that 
$$
\gen(\ell_{2,40})=\{\ell_{2,40},\ \ell_{8,10}\}, \qquad  \gen(\ell_{2,22})=\left\lbrace\ell_{2,22},   \    
m_{2,22}=\langle1\rangle\perp\begin{pmatrix} 6&2\\ 2&8\end{pmatrix}\right\rbrace
$$ 
and 
$$ 
 \gen(\ell_{2,70})=\left\lbrace\ell_{2,70},
  \  m_{2,70}=\langle1\rangle\perp\begin{pmatrix} 8&2\\ 2&18\end{pmatrix}\right\rbrace.
$$
Let $n$ be any integer such that $n \equiv 0,1 \pmod 4$. Define a map
$$
\begin{cases}  f_n: R(n,\ell_{8,10}) \mapsto R(n,\ell_{2,40}) \quad &\text{by} \quad f_n(x,y,z)=\left(x,2y,\displaystyle\frac{z}{2}\right), \\
f_n: R(n,\ell_{2,22}) \mapsto R\left(n,m_{2,22}\right) \quad &\text{by} \quad f_n(x,y,z)=\left(x,2z,\displaystyle\frac{y-z}{2}\right),\\ 
f_n: R(n,\ell_{2,70}) \mapsto R\left(n,m_{2,70}\right) \quad &\text{by} \quad f_n(x,y,z)=\left(x,\displaystyle \frac{y-z}{2},2z\right).\\ 
\end{cases}
$$
One may easily check that $f_n$ is a well defined bijective map.  The lemma follows directly from this. 
\end{proof} 

%%%%%%%%%%%%%%%%%%%%%%%%%%%%%%%%%%%%%%%%%%%%%%%%%%%%%%%%%%%%%%%%%%%%%%%%%%%%%%%%%
%%%%%%%%%%%%%%%%%%%%%%%%%%%%%%%%%%%%%%%%%%%%%%%%%%%%%%%%%%%%%%%%%%%%%%%%%%%%%%%%%

\begin{lem} \label{odd}
For any positive integer $n$ such that $n \equiv 1 \pmod 8$, 
$$
r(n,\ell_{3,5})=r\left(n, m_{3,5}\right), \ \   r(n,\ell_{3,21})=r\left(n, m_{3,21}=\langle1\rangle\perp\begin{pmatrix} 6&3\\ 3&12\end{pmatrix}\right)
$$ 
and 
$$
r(n,\ell_{3,45})=r\left(n, m_{3,45}=\langle1\rangle\perp\begin{pmatrix} 12&3\\ 3&12\end{pmatrix}\right).
$$
\end{lem}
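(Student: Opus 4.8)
The plan is to imitate Theorem~\ref{easy-1}: for each pair $(b,c)=(3,5),(3,21),(3,45)$ I will exhibit an explicit rational‑linear substitution carrying $\ell_{b,c}$ to $m_{b,c}$ and show that, when $n\equiv 1\pmod 8$, it restricts to a bijection from $R(n,\ell_{b,c})$ onto $R(n,m_{b,c})$. Such a substitution exists because $\ell_{b,c}$ and $m_{b,c}$ lie in one genus, hence are isometric over $\q$ and over every $\z_p$; one may pick a rational isometry $\sigma:\q\otimes\ell_{b,c}\to\q\otimes m_{b,c}$ whose only denominators are powers of $2$. The content of the lemma is that the congruence $n\equiv 1\pmod 8$ is precisely strong enough to force every $(x,y,z)\in R(n,\ell_{b,c})$ into the sublattice on which $\sigma$ is integral, and symmetrically for $\sigma^{-1}$.

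Concretely, I would first record $m_{3,5}$ explicitly (the forms $m_{3,21},m_{3,45}$ are already in the statement) and, in each case, fix $\sigma$ and $\sigma^{-1}$ by a short diagonalize‑and‑rescale computation on binary sections of the ternary forms, entirely analogous to the maps $(x,y,z)\mapsto(x,2y,\tfrac z2)$ and $(x,y,z)\mapsto(x,2z,\tfrac{y-z}2)$ of Theorem~\ref{easy-1}. Next, for a solution $\ell_{b,c}(x,y,z)=n$ with $n\equiv 1\pmod 8$, I would run through the possible residues of $(x,y,z)$ modulo $8$, using only that a square is $\equiv 0,1,4\pmod 8$; this pins down which coordinates must be even and which must be divisible by $4$, and one checks that in every admissible case this is exactly the divisibility needed for $\sigma(x,y,z)\in\z^3$. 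Applying the same bookkeeping to $m_{b,c}$ and $\sigma^{-1}$ shows $\sigma$ is onto, so $r(n,\ell_{b,c})=r(n,m_{b,c})$. Finally I would note that for $n\not\equiv 1\pmod 8$ some admissible residue type escapes the domain of $\sigma$, which is consistent with the later fact that $\gen(\ell_{b,c})$ is \emph{not} indistinguishable by squares for these three $b,c$.

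The main obstacle is the $2$-adic bookkeeping in the middle step: verifying that the mod‑$8$ case analysis of $R(n,\ell_{b,c})$ is exhaustive, that in each surviving case the solution really lands in the domain of $\sigma$ with nothing double‑counted, and then repeating everything for $\sigma^{-1}$. A less computational alternative is available as a check: each of these three genera consists of two single‑class spinor genera, the spinor exceptional integers are cut out by a square‑class condition at $p=2$, and $n\equiv 1\pmod 8$ makes $n$ a square in $\z_2^\times$ and hence never spinor exceptional, so $r(n,\ell_{b,c})=r(n,\mathrm{spn}^{+}(\ell_{b,c}))=r(n,\gen(\ell_{b,c}))=r(n,m_{b,c})$. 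I would keep the explicit bijection as the main argument, matching the elementary style of Theorem~\ref{easy-1}, and use the spinor picture only to explain why the threshold is exactly modulo $8$.
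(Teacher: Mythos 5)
Your main construction cannot exist, and this is exactly why the paper's proof is more involved than that of Theorem \ref{easy-1}. Suppose $\sigma\colon \q\otimes\ell_{3,5}\to\q\otimes m_{3,5}$ is any rational isometry and let $\ell'=\{v\in\ell_{3,5} : \sigma(v)\in m_{3,5}\}$ be the sublattice on which $\sigma$ is integral. Take $n=9\equiv 1\pmod 8$: the solutions $(\pm3,0,0)$, $(\pm1,\pm1,\pm1)$, $(\pm2,0,\pm1)$ of $x^2+3y^2+5z^2=9$ generate all of $\z^3$ (from $(2,0,1)$ and $(2,0,-1)$ one gets $(4,0,0)$, hence $(1,0,0)=(4,0,0)-(3,0,0)$, then $(0,0,1)=(2,0,1)-2(1,0,0)$ and $(0,1,0)=(1,1,1)-(1,0,0)-(0,0,1)$). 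So if every solution lay in $\ell'$, then $\ell'=\ell_{3,5}$ and $\sigma(\ell_{3,5})\subseteq m_{3,5}$; comparing discriminants would give $\sigma(\ell_{3,5})=m_{3,5}$, contradicting $h(\ell_{3,5})=2$. Hence no single rational-linear substitution, whatever its denominators (2-power or otherwise), can restrict to a bijection $R(9,\ell_{3,5})\to R(9,m_{3,5})$, and the mod-$8$ bookkeeping you propose cannot close. The paper's proof avoids this by \emph{not} using one map: it partitions $R(n,\ell_{3,5})$ and $R(n,m_{3,5})$ into parity classes $A_1,A_2,A_3$ and $B_1,B_2,B_3$, matches $A_1\cup A_2$ with $B_1\cup B_2$ by one linear map $f(x,y,z)=\bigl(x,\tfrac{y+3z}{2},\tfrac{y-z}{2}\bigr)$, and compares $A_3$ with $B_3$ only after halving both sides via the symmetries $z\mapsto-z$ and $x\mapsto-x$ (passing to subsets $A_3^0,B_3^0$ cut out by congruences mod $8$), where a \emph{different} pair of mutually inverse linear maps $g,h$ is exhibited. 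Several maps together with this counting trick, rather than one substitution, is the missing idea.

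The spinor-genus fallback is not available either. Already for $(b,c)=(3,5)$ the two classes lie in one and the same spinor genus: $(\ell_{3,5})_2$ is unimodular of rank $3$ and the Jordan splittings at $3$ and $5$ each have a unimodular component of rank $2$, so every local spinor norm group contains $\z_p^\times(\q_p^\times)^2$ and the genus consists of a single spinor genus (the same holds for $(3,21)$). Thus the premise ``two single-class spinor genera'' is false, the chain $r(n,\ell_{b,c})=r(n,\spn(\ell_{b,c}))=r(n,\gen(\ell_{b,c}))=r(n,m_{b,c})$ never gets started, and spinor exceptional integers are irrelevant here; this is consistent with the paper's observation that $r(4,\ell_{3,5})=6\neq 2=r(4,m_{3,5})$ even though the forms are in the same genus. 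Any conceptual explanation of the modulus $8$ has to come from the $2$-adic analysis of which parity classes of solutions occur, not from spinor theory.
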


\begin{proof}
Since proofs are quite similar to each other, we only provide the proof of the first case. Note that $m_{3,5}=\langle1\rangle\perp\begin{pmatrix} 2&1\\ 1&8\end{pmatrix}$. Let $n$ be an integer such that $n \equiv 1 \pmod 8$.  Define
$$
A_1(n)=\{(x,y,z)\in \z^3\mid x^2+3y^2+5z^2=n, \ x\equiv y\equiv z\equiv 1\!\! \!\!\!\pmod 2\},
$$
$$
A_2(n)=\{(x,y,z)\in \z^3\mid x^2+3y^2+5z^2=n,  \ x\equiv 1\!\!\!\!\! \pmod 2, \ y\equiv z \equiv 0\!\! \!\!\!\pmod 2\},
$$
and
$$
A_3(n)=\{(x,y,z)\in \z^3\mid x^2+3y^2+5z^2=n, \ x\equiv y \equiv 0\!\!\!\!\! \pmod 2, \ z \equiv 1\!\! \!\!\!\pmod 2\}.
$$
 We also define 
$$
B_1(n)=\{(x,y,z)\in \z^3\mid x^2+2y^2+8z^2+2yz=n,  \ x\equiv y \equiv z \equiv 1 \!\! \!\!\!\pmod 2\},
$$
$$
B_2(n)=\{(x,y,z)\in \z^3\mid x^2+2y^2+8z^2+2yz=n, x\equiv 1\!\!\!\!\! \pmod 2, \ y\equiv z \equiv 0\!\! \!\!\!\pmod 2\}, 
$$
and
$$
B_3(n)=\{(x,y,z)\in \z^3\mid x^2+2y^2+8z^2+2yz=n,\ x\equiv z\equiv 1 \!\!\!\!\!\pmod 2, \ y\equiv 0 \!\!\!\!\!\pmod 4\}.
$$
Then it is clear that 
\begin{displaymath}
R(n,\ell_{3,5})=A_1(n) \cup A_2(n) \cup A_3(n) \ \ \text{and} \  \ 
R(n,m_{3,5})=B_1(n) \cup B_2(n) \cup B_3(n).
\end{displaymath}
One may easily check that $y\equiv z \pmod 4$ for any $(x,y,z) \in A_2(n)$, $x\not \equiv y \pmod 4$ for any $(x,y,z) \in A_3(n)$ and $y \not\equiv z\pmod 4$ for any $(x,y,z) \in B_1(n)$. 

We  prove that 
$$
\vert A_1(n)\vert+\vert A_2(n)\vert=\vert B_1(n)\vert+\vert B_2(n)\vert\quad \text{and}\quad  \vert A_3(n)\vert=\vert B_3(n)\vert,
$$ 
which implies the assertion. 
First, we define a map $f:A_1(n)\cup A_2(n)\mapsto B_1(n)\cup B_2(n)$ by
$$
f(x,y,z)=\left(x, \frac{y+3z}{2}, \frac{y-z}{2}\right).
$$
Since
$$
x^2+3y^2+5z^2=x^2+2\left(\frac{y+3z}2\right)^2+2\left(\frac{y+3z}2\right)\left(\frac{y-z}2\right)+8\left(\frac{y-z}2\right)^2
$$
and 
$$
\displaystyle \frac{y+3z}2-\displaystyle\frac{y-z}2 =2z\equiv 0 \pmod 2,
$$
the above map $f$ is well defined. Furthermore one may easily check that $f$ is bijective.  

To show that $\vert A_3(n)\vert=\vert B_3(n)\vert$, we define 
$$
A^0_3(n)=\{ (x,y,z) \in A_3(n) \mid x-y-2z \equiv 4 \!\!\!\!\!\pmod 8\}
$$   
and
$$
B^0_3(n)=\{ (x,y,z) \in B_3(n) \mid 2x-y+2z \equiv 0 \!\!\!\!\!\pmod 8\}.
$$
Since $(x,y,z)\in A_3(n) \iff (x,y,-z) \in A_3(n)$, we have $2\vert A^0_3(n)\vert=\vert A_3(n)\vert$. Similarly, we also have 
 $2\vert B^0_3(n)\vert=\vert B_3(n)\vert$ from the fact that  $(x,y,z)\in B_3(n) \iff (-x,y,z) \in B_3(n)$.
Now we define a map $g:A^0_3(n)\mapsto B^0_3(n)$ by 
$$
g(x,y,z)=\left(\frac{x+3y}{2}, \frac{x-y+2z}{2}, \frac{-x+y+2z}{4}\right)
$$
and a map $h:B^0_3(n)\mapsto A^0_3(n)$ by
$$
h(x,y,z)=\left(\frac{2x+3y-6z}{4}, \frac{2x-y+2z}{4}, \frac{y+2z}{2}\right).
$$
One may easily check that both $g$ and $h$ are well defined and $h\circ g=\text{id},\ g\circ h=\text{id}$. 
\end{proof}

%%%%%%%%%%%%%%%%%%%%%%%%%%%%%%%%%%%%%%%%%%%%%%%%%%%%%%%%%%%%%%%%%%%%%%%%%%%%%%%%%%%%%%%%%%%%%%%%%%%%%%%
%%%%%%%%%%%%%%%%%%%%%%%%%%%%%%%%%%%%%%%%%%%%%%%%%%%%%%%%%%%%%%%%%%%%%%%%%%%%%%%%%%%%%%%%%%%%%%%%%%%%%%%
%%%%%%%%%%%%%%%%%%%%%%%%%%%%%%%%%%%%%%%%%%%%%%%%%%%%%%%%%%%%%%%%%%%%%%%%%%%%%%%%%%%%%%%%%%%%%%%%%%%%%%%

We put 
$$
 K_1= \begin{pmatrix} 1&0&0\\ 0&2&1\\ 0&1&7\end{pmatrix}, \ \  K_2=\begin{pmatrix} 2&0&1\\ 0&3&1\\ 1&1&3\end{pmatrix}, \ \
L_1= \begin{pmatrix} 2&0&1\\ 0&4&2\\ 1&2&8\end{pmatrix},\ \  L_2=\begin{pmatrix} 2&1&1\\ 1&4&0\\ 1&0&8\end{pmatrix}, 
$$
 and
$$
T= \begin{pmatrix} 2&0&1\\ 0&4&1\\ 1&1&4\end{pmatrix}.
$$

\begin{thm}  Both $\gen(\ell_{2,13})$ and $\gen(\ell_{8,13})$ are indistinguishable by squares. 
\end{thm}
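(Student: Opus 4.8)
The plan is to verify the hypothesis of Lemma~\ref{hecke-act}: it suffices to show $r(n^2,\ell_{b,13})=r(n^2,m_{b,13})$ for $b\in\{2,8\}$ whenever every prime divisor of $n$ lies in $\{2,13\}$, where $m_{b,13}$ denotes the second class in $\gen(\ell_{b,13})$. Since $\ell_{b,13}=\langle1,b,13\rangle$ represents $1$ with exactly two representations, so does every form in its genus, and hence $m_{b,13}\simeq\langle1\rangle\perp B$ for a binary form $B$ with $dB=13b$; the first task is to write $B$ down explicitly and check that $B$ does not itself represent $1$, so that the two counts already agree at $n=1$.

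Next I would dispose of the prime $13$. The unimodular Jordan component of $(\ell_{b,13})_{13}$ is $\langle1,b\rangle$, which is anisotropic over $\q_{13}$ since $-2$ and $-8$ are non-residues modulo $13$; the same holds for $m_{b,13}$, so Lemma~\ref{aniso} applies at $p=13$ to every form in either genus. Computing the Watson sublattice directly one finds $\Lambda_{13}(\ell_{2,13})=13\cdot\langle1,13,26\rangle$ and $\Lambda_{13}(\langle1,13,26\rangle)=13\cdot\ell_{2,13}$, together with the corresponding identities for $b=8$ (involving $\langle1,13,104\rangle$) and for the $m_{b,13}$'s; applying Lemma~\ref{aniso} twice then yields $r(13^2k,L)=r(k,L)$ for every $k$ and every $L$ in either genus. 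This reduces the claim to the case $n=2^a$, i.e. to proving $r(4^a,\ell_{b,13})=r(4^a,m_{b,13})$.

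For the prime $2$ the clean transformations are not available, so I would argue in the style of Theorem~\ref{easy-1} and Lemma~\ref{odd}. The Watson transformation at $2$ sends $\ell_{2,13},m_{2,13}$ to the determinant-$13$ forms $K_1,K_2$ and $\ell_{8,13},m_{8,13}$ to forms built from the determinant-$52$ forms $L_1,L_2$, while $T$ (determinant $26$) bridges the determinant-$13$ and determinant-$26$ levels; the index-$2$ inclusion $\langle1,8,13\rangle\hookrightarrow\langle1,2,13\rangle$ (the sublattice with middle coordinate even) lets the cases $b=2$ and $b=8$ be handled together. Concretely, I would partition $R(4^a,\ell_{b,13})$ according to the $2$-adic congruence classes of the coordinates and construct explicit linear bijections, factoring through the representation sets of $K_1,K_2,L_1,L_2$ and $T$, which match each piece with the corresponding piece of $R(4^a,m_{b,13})$; the congruence restrictions are precisely what keeps the substitutions integral.

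I expect this last step to be the main obstacle. Since no Watson-type reduction is available at $p=2$, the $2$-adic behaviour must be unwound by hand, and the bookkeeping of the several parity classes --- together with the requirement that the bijections link the companion forms correctly ($K_1\leftrightarrow K_2$, $L_1\leftrightarrow L_2$, $T$ fixed) --- is delicate. A smaller point that also needs attention is checking that $\lambda_{13}$ permutes the classes of the intermediate genera (determinant $338$ for $b=2$, determinant $1352$ for $b=8$) so that the reduction in the second paragraph is valid for $m_{b,13}$ and not only for $\ell_{b,13}$.
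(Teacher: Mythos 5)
Your skeleton agrees with the paper's in two places: the final appeal to Lemma~\ref{hecke-act} (so that it suffices to treat $n$ supported on $\{2,13\}$) and the removal of powers of $13$ via Lemma~\ref{aniso}, using that $\langle1,2\rangle$ and $\langle1,8\rangle$ are anisotropic over $\q_{13}$ and that applying $\lambda_{13}$ twice returns to the original class. Those parts are fine (and your worry about the double Watson step landing back in the same class, not just the same genus, is a legitimate check the paper also leaves implicit). The problem is the prime $2$, which is the heart of the theorem, and there your proposal contains no proof: you only announce a plan to partition $R(4^a,\ell_{b,13})$ into $2$-adic congruence classes and ``construct explicit linear bijections'' in the style of Lemma~\ref{odd}, and you yourself flag this as the main obstacle. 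No bijections are exhibited, and, more importantly, no mechanism is given that propagates the equality from level $4^a$ to level $4^{a+1}$; matching ``pieces'' at a fixed level does not by itself yield the induction on $\ord_2(n)$ that the statement requires.

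Moreover, your premise that ``no Watson-type reduction is available at $p=2$'' is exactly where the paper's idea lives. The paper first records $r(4n^2,\ell_{2,13})=r(2n^2,K_1)=r(2n^2,L_1)$ and $r(4n^2,m_{2,13})=r(2n^2,K_2)=r(2n^2,L_2)$, then applies the $\Gamma$-sublattice identity of Lemma~\ref{iso} (Proposition 4.1 of \cite{jlo}) to $L_1$ and $L_2$, obtaining
$$
r(8n^2,L_1)=2r(4n^2,T)-r(2n^2,K_2), \qquad r(8n^2,L_2)=2r(4n^2,T)-r(2n^2,K_1),
$$
in which the common term $2r(4n^2,T)$ cancels and the $K$-terms are \emph{crossed} between the two classes. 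Subtracting gives
$$
r(16n^2,\ell_{2,13})-r(4n^2,\ell_{2,13})=r(16n^2,m_{2,13})-r(4n^2,m_{2,13}),
$$
which, together with $r(4n^2,\cdot)=r(n^2,\cdot)$ for odd $n$, is precisely the recursion that reduces all powers of $2$ to the odd case; the case $b=8$ then follows from $r(4n^2,\ell_{8,13})=r(n^2,\ell_{2,13})$ and its analogue for $m_{8,13}$, much as you suggest. Without this crossed difference identity (or a worked-out substitute), your argument does not close, so as it stands the proposal has a genuine gap at its central step.
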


\begin{proof} Note that 
$$
 m_{2,13}= \langle1\rangle \perp \begin{pmatrix} 5&2\\ 2&6\end{pmatrix}, \ \ m_{8,13}= \langle1\rangle \perp \begin{pmatrix} 5&1\\ 1&21\end{pmatrix}.
$$
For any integer $n$,
$$
r(4n^2,\ell_{8,13})=r(n^2,\ell_{2,13}) \quad \text{and} \quad   r(4n^2, m_{8,13})=r(n^2,m_{2,13}).
$$
If $n$ is odd, then 
$$
r(n^2,\ell_{2,13})=r(n^2,\ell_{8,13}) \quad \text{and} \quad   r(n^2, m_{2,13})=r(n^2,m_{8,13}).
$$
Hence $\gen(\ell_{2,13})$ is indistinguishable by squares if and only if $\gen(\ell_{8,13})$ is indistinguishable by squares. Therefore it suffices to show that $\gen(\ell_{2,13})$\ is indistinguishable by squares.  Note that 
$$
r(4n^2,\ell_{2,13})=r(2n^2, K_1)=r(2n^2,L_1) \ \text{and}\ r(4n^2, m_{2,13})=r(2n^2, K_2)=r(2n^2,L_2).
$$
Now by Lemma \ref{iso}, we have 
$$
r(8n^2,L_1)=2r(4n^2,T)-r(2n^2,K_2) \quad \text{and} \quad r(8n^2,L_2)=2r(4n^2,T)-r(2n^2,K_1).
$$
By combining the last two equalities, we have
$$
r(16n^2,\ell_{2,13})-r(4n^2,\ell_{2,13})=r(16n^2,m_{2,13})-r(4n^2,m_{2,13}).
$$
Furthermore, since $r(4n^2,\ell_{2,13})=r(n^2,\ell_{2,13})$ and $r(4n^2,m_{2,13})=r(n^2,m_{2,13})$ for any odd integer $n$, it suffices to show that 
$r(n^2,\ell_{2,13})=r(n^2,m_{2,13})$ for any odd integer $n$. Now by Lemma \ref{aniso}, we know that  
$r(13^2\cdot n^2,\ell_{2,13})=r(n^2,\ell_{2,13})$ and   $r(13^2\cdot n^2,m_{2,13})=r(n^2,m_{2,13})$. Therefore the theorem follows directly from Lemma \ref{hecke-act}. 
\end{proof}

\begin{prop} The genus of $\ell_{5,13}$ is indistinguishable by squares. 
\end{prop}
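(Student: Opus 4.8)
The plan is to prove $r(n^2,\ell_{5,13})=r(n^2,m_{5,13})$ for every positive integer $n$, where $m_{5,13}$ is the other class of $\gen(\ell_{5,13})$; a reduction computation shows $m_{5,13}=\langle1\rangle\perp\begin{pmatrix}2&1\\1&33\end{pmatrix}$. By Lemma~\ref{hecke-act} it is enough to treat those $n$ all of whose prime factors lie in $P(2d\ell_{5,13})=\{2,5,13\}$, so I would remove the primes $2$, $5$, $13$ one after another and reduce to $n=1$.

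For the prime $2$: since $5\equiv13\equiv5\pmod 8$ and $\langle5,5\rangle$ is $\z_2$-isometric to $\langle1,1\rangle$, both $\ell_{5,13}$ and $m_{5,13}$ are $\z_2$-isometric to $\langle1,1,1\rangle$. For any ternary $\z$-lattice $K$ with $K_2\simeq\langle1,1,1\rangle$ and any $M\equiv0\pmod 4$, a vector $x$ with $Q(x)=M$ satisfies $Q(x)\equiv0\pmod 4$, which over $\z_2$ forces $x\in 2K_2$; as $2$ is a unit at every odd prime, $x\in 2K$, so $x\mapsto x/2$ is a bijection $R(M,K)\to R(M/4,K)$. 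Iterating, $r(n^2,K)=r((n')^2,K)$ where $n'$ is the odd part of $n$, for $K=\ell_{5,13}$ and $K=m_{5,13}$; thus it remains to treat odd $n$ with $P(n)\subseteq\{5,13\}$.

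For the primes $5$ and $13$: at $p=5$ the unimodular Jordan component of $(\ell_{5,13})_5$ is $\langle1,13\rangle$, anisotropic because $-13\equiv2$ is a nonsquare mod $5$; at $p=13$ the component $\langle1,5\rangle$ is anisotropic because $-5\equiv8$ is a nonsquare mod $13$. A direct computation of the Watson transformations shows $\lambda_5(\ell_{5,13})=\langle1,5,65\rangle$, $\lambda_5(\langle1,5,65\rangle)=\ell_{5,13}$, $\lambda_{13}(\ell_{5,13})=\langle1,13,65\rangle$, $\lambda_{13}(\langle1,13,65\rangle)=\ell_{5,13}$, and---this is the part that needs the explicit lattice computations and is the main obstacle---that $\lambda_5$ and $\lambda_{13}$ act on the non-principal class $m_{5,13}$ in such a way that $\lambda_5^2$ and $\lambda_{13}^2$ fix the whole genus $\gen(\ell_{5,13})$. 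Granting this, two applications of Lemma~\ref{aniso} (noting that $\Lambda_p(L)$ has scale $p$, so $r(pm,\Lambda_p(L))=r(m,\lambda_p(L))$) give, for every $N$,
$$
r(25N,\ell_{5,13})=r(N,\ell_{5,13}),\qquad r(169N,\ell_{5,13})=r(N,\ell_{5,13}),
$$
and the same identities with $m_{5,13}$ in place of $\ell_{5,13}$. Hence if $5\mid n$ (say $n=5k$) then $r(n^2,L)=r(25k^2,L)=r(k^2,L)$, and similarly if $13\mid n$, so $r(n^2,\ell_{5,13})$ and $r(n^2,m_{5,13})$ are each unchanged when $n$ is replaced by its largest divisor coprime to $65$.

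Combining the three reductions leaves only the case $n=1$, where $r(1,\ell_{5,13})=2=r(1,m_{5,13})$ (the binary form $\begin{pmatrix}2&1\\1&33\end{pmatrix}$ has minimum $2$, so does not represent $1$). This gives $r(n^2,\ell_{5,13})=r(n^2,m_{5,13})$ for all $n$, so $\gen(\ell_{5,13})$ is indistinguishable by squares. Apart from tracking how $\lambda_5$ and $\lambda_{13}$ permute the two classes of $\gen(\ell_{5,13})$ (and of the transformed genera), every step is either the elementary $2$-adic descent above or a direct appeal to Lemma~\ref{aniso} and Lemma~\ref{hecke-act}.
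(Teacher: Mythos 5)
Your overall skeleton (reduce via Lemma~\ref{hecke-act} to $n$ with $P(n)\subseteq\{2,5,13\}$, kill the $2$-part by a local divisibility argument, kill the $5$- and $13$-parts using anisotropy, and finish with $r(1,\ell_{5,13})=r(1,m_{5,13})=2$) matches the paper, and the $2$-adic descent and the Hecke reduction are fine. The genuine gap is exactly where you flag it: your treatment of $p=5,13$ routes through Lemma~\ref{aniso} and the Watson maps, so to get $r(p^2N,m_{5,13})=r(N,m_{5,13})$ you need to know that $\lambda_p^2$ returns the \emph{class} of $m_{5,13}$ to itself, and you grant this without proof. Note also that the property you propose to grant, namely that $\lambda_5^2$ and $\lambda_{13}^2$ ``fix the whole genus,'' is not even sufficient as stated: a priori $\lambda_p^2$ could fix $[\ell_{5,13}]$ and still send $[m_{5,13}]$ to $[\ell_{5,13}]$ (Watson maps need not be injective on classes), and then your chain would only give $r(p^2N,m_{5,13})=r(N,\ell_{5,13})$. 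So as written the argument is incomplete at the one step that actually carries the content.

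The missing computation is true but also unnecessary, and avoiding it is essentially what the paper does. Since $(\ell_{5,13})_p\simeq\langle 1,5,13\rangle$ is anisotropic over $\z_p$ for $p=5,13$, any $v\in K_p$ with $Q(v)\equiv 0 \pmod{p^2}$ lies in $pK_p$: the anisotropic unimodular component forces its two coordinates to be divisible by $p$, and then the $p$-modular coordinate as well. Because this is a purely local statement at $p$, it holds for \emph{every} $K\in\gen(\ell_{5,13})$, and globally $v/p\in K$ (at primes $q\ne p$ division by $p$ is harmless), so $v\mapsto v/p$ gives $r(p^2N,K)=r(N,K)$ for both classes simultaneously, with no tracking of how $\lambda_p$ permutes classes. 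With that replacement (or after you actually verify $\lambda_p^2(m_{5,13})\simeq m_{5,13}$ for $p=5,13$), your proof closes up and coincides in substance with the paper's: anisotropy at $5$ and $13$ plus the evenness forced by $\equiv 0\pmod 4$ reduce everything to $n=1$, and Lemma~\ref{hecke-act} finishes.
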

\begin{proof} 
Note that $\gen(\ell_{5,13})=\left\lbrace\ell_{5,13}, m_{5,13}=\langle1\rangle \perp  \begin{pmatrix} 2&1\\ 1&33 \end{pmatrix}\right\rbrace$. Since $\ell_{5,13}$ is aniso-tropic over $\z_5$ and $\z_{13}$, $r(n^2,\ell_{5,13})=r(n^2, m_{5,13})$ for any odd integer $n$ by Lemma \ref{hecke-act}. If $x^2+5y^2+13z^2=4n^2$, then $x,y,z$ are all even. Therefore  
$$
r(4n^2,\ell_{5,13})=r(n^2,\ell_{5,13}) \quad \text{and}\quad r(4n^2,m_{5,13})=r(n^2,m_{5,13}).
$$
The proposition follows from this. 
\end{proof}

\begin{rmk} {\rm All genera containing $\ell_{3,5}$, $\ell_{3,21}$ or $\ell_{3,45}$ are not indistinguishable by squares. 
 For example, $r(4,\ell_{3,k})=6 \ne 2= r(4,m_{3,k})$ for any $k=5,21$ and $45$. However by Lemma \ref{odd}, $r(n^2,\ell_{3,k})=r(n^2,m_{3,k})$ for any odd integer $n$.}    
\end{rmk}

\begin{exam} As pointed out earlier, if the genus of a $\z$-lattice $L$ is indistinguishable by squares, then we may have a closed formula for $r(n^2,L')$ for any $\z$-lattice $L' \in \gen(L)$. 
For example, one may easily show that
$$
\frac{\alpha_2(n^2,\ell_{2,13})}{\alpha_2(1,\ell_{2,13})}=\frac{2^{\max(1,\lambda_{2})+1}-3}{2^{\lambda_2}} \quad \text{and} \quad \frac{\alpha_{13}(n^2,\ell_{2,13})}{\alpha_{13}(1,\ell_{2,13})}=\frac{1}{13^{\lambda_{13}}},
$$
where $\lambda_p=\ord_p(n)$ for any prime $p$. Therefore by Lemma \ref{important-l}, we have 
$$
r(n^2,\ell_{2,13})\!=r(n^2,m_{2,13})\!=
2(2^{\max(1,\lambda_{2})+1}-3)\!\!\!\!\!\prod_{\substack {p\mid n, \\ (p,26)=1}}\!\!\!\!\!\left( \frac{p^{\lambda_p +1}-1}{p-1}-\left(\frac{-26}{p}\right)\frac{p^{\lambda_p}-1}{p-1}\right).
$$
\end{exam}

\end{document}